\documentclass[12pt,a4paper]{amsart}

\usepackage{amsfonts}

\usepackage{amsthm}

\usepackage{amsmath}

\usepackage{amscd}

\usepackage[latin2]{inputenc}

\usepackage{t1enc}

\usepackage[mathscr]{eucal}

\usepackage{indentfirst}

\usepackage{graphicx}

\usepackage{graphics}

\usepackage{pict2e}

\numberwithin{equation}{section}

     \addtolength{\textwidth}{1 truecm}

    \addtolength{\textheight}{1 truecm}

     \setlength{\voffset}{-.6 truecm}

     \setlength{\hoffset}{-1.3 truecm}

\theoremstyle{plain}

\newtheorem{Th}{Theorem}[section]

\newtheorem{Lemma}[Th]{Lemma}

\newtheorem{Cor}[Th]{Corollary}

\newtheorem{Pro}[Th]{Proposition}

\newtheorem{Fact}[Th]{Fact}

 \theoremstyle{definition}

\newtheorem{Def}[Th]{Definition}

\newtheorem{Rem}[Th]{Remark}

\newtheorem{?}[Th]{Problem}

\newcommand{\N}{\mathbb{N}}

\newcommand{\C}{\mathbb{C}}

\begin{document}

\title[Root moments  of graph polynomials]{Benjamini--Schramm continuity of 
root moments  of graph polynomials}

\author[P. Csikv\'ari]{P\'{e}ter Csikv\'{a}ri}

\address{Massachusetts Institute of Technology \\ Department of Mathematics \\
Cambridge MA 02139 \&  E\"{o}tv\"{o}s Lor\'{a}nd University \\ Department of Computer 
Science \\ H-1117 Budapest
\\ P\'{a}zm\'{a}ny P\'{e}ter s\'{e}t\'{a}ny 1/C \\ Hungary} 

\email{peter.csikvari@gmail.com}

\author[P. E. Frenkel]{P\'eter E. Frenkel}

\address{E\"{o}tv\"{o}s Lor\'{a}nd University \\ Department of Algebra and 
Number Theory \\ H-1117 Budapest
\\ P\'{a}zm\'{a}ny P\'{e}ter s\'{e}t\'{a}ny 1/C \\ Hungary}

\email{frenkelp@cs.elte.hu}

\thanks{}

\thanks{ Both authors are partially supported by MTA R\'enyi
"Lend\"ulet" Groups and Graphs Research Group.}

 \subjclass[2010]{Primary: 05C31. Secondary: 05C15, 05C40, 05C60}

 \keywords{Graph polynomial of exponential type, Tutte polynomial, Chromatic
   polynomial, Benjamini--Schramm
   convergence, Sokal bound}

\begin{abstract} Recently, M.\ Ab\'ert and T.\ Hubai studied the following
problem. The chromatic measure of a finite simple graph is defined to be the 
uniform distribution on its chromatic roots.   Ab\'ert and  Hubai proved that
for a Benjamini-Schramm convergent sequence of finite graphs, the chromatic
measures converge in holomorphic moments. They also showed that the
normalized logarithm of the chromatic polynomial converges to a harmonic real
function outside a bounded disc. 

In this paper we generalize their work to a  wide class of graph polynomials,
namely, multiplicative graph polynomials of bounded exponential
type.   A special case of our results is  that for any fixed complex
number $v_0$ the measures arising from the  Tutte polynomial $Z_{G_n}(z,v_0)$
converge in holomorphic moments if the sequence  $(G_n)$ of finite graphs is
Benjamini--Schramm convergent. This answers a question of   Ab\'ert and  Hubai
in the affirmative. Even in the original case of the chromatic polynomial, our
proof is considerably simpler. 
\end{abstract}

\maketitle

\section{Introduction --- background and main results} 
This paper generalizes certain results of Sokal \cite{sok}, Borgs, Chayes,
Kahn and Lov\'asz \cite{bckl} and, most importantly, Ab\'ert and Hubai
\cite{abhu}. In this section we briefly recall these results and state our
main theorems: \ref{f-Sokal} and \ref{main}.

Let $G=(V,E)$ be a finite, simple, undirected graph. (Note that we follow the
usual notations. However, if some notation is unclear, the reader may find its
meaning at the end of this section.) A map $f: V\rightarrow \{1,2,\dots ,q\}$
is a \it proper coloring \rm if for all edges $(x,y)\in E$ we have $f(x)\neq
f(y)$. For a positive integer $q$ let $ch(G,q)$ denote the number of proper
colorings of $G$ with $q$ colors. Then it turns out that $ch(G,q)$ is a
polynomial in $q$. It is called the chromatic polynomial \cite{rea}. Let us
call the roots of  the chromatic polynomial \textit{chromatic roots}. The
\textit{chromatic measure}  is the probability measure  $\mu_G$ on
$\mathbb{C}$ given by the uniform distribution on the chromatic
roots. 

\subsection{The Sokal bound}
More than a decade ago, Alan Sokal proved the following theorem.

\begin{Th}[Sokal \textrm{\cite{sok}}]\label{Sokal} Let $G$ be a graph of largest
degree  $\Delta$. Then the absolute value of any root of the  chromatic
polynomial of $G$ is at most $C\Delta$, where the constant $C$ is less than
$8$.   
\end{Th}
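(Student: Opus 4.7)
The plan is to prove equivalently that $ch(G,q)\neq 0$ whenever $|q|>C\Delta$ for a suitable $C<8$. The natural framework is the Fortuin--Kasteleyn / multivariate Tutte representation
\[
Z_G(q,v)=\sum_{A\subseteq E}q^{c(A)}v^{|A|},\qquad ch(G,q)=Z_G(q,-1),
\]
where $c(A)$ denotes the number of connected components of the spanning subgraph $(V,A)$. I would first renormalize by $q^{|V|}$, so that
\[
\frac{Z_G(q,v)}{q^{|V|}}=\sum_{A\subseteq E}q^{c(A)-|V|}v^{|A|},
\]
and set $v=-1$. Since $c(A)-|V|\le 0$ with equality only when $A=\emptyset$, large $|q|$ strongly suppresses every non-empty term, which is what we want to exploit.

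The next step is a polymer / cluster (Mayer) expansion of $\log\bigl(Z_G(q,-1)/q^{|V|}\bigr)$. Viewing each spanning subgraph $(V,A)$ as a disjoint union of its non-trivial connected components (``polymers''), the normalized partition function factors as the polymer partition function of a hard-core gas on connected edge subsets $S$ with activity
\[
\zeta(S)=(-1)^{|S|}\,q^{\,1-|V(S)|},
\]
and its logarithm admits the standard expansion as an absolutely convergent sum over clusters, provided the activities are small enough.

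The main technical step, and the obstacle to getting $C<8$, is the convergence criterion. Using $\Delta(G)\le\Delta$ one bounds the number of connected edge sets of size $k$ containing a fixed vertex (or a fixed edge) by roughly $(e\Delta)^k$ via a classical tree / BEK--type counting argument. Feeding this estimate into the Kotecky--Preiss condition, or equivalently into a Dobrushin-style fixed-point inequality for the total weight of polymers through a fixed edge, yields absolute convergence of the cluster expansion whenever $|q|\ge C\Delta$; optimizing the free parameter in the convergence criterion with the sharp tree bound is what brings $C$ below $8$.

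Absolute convergence of the cluster expansion in the region $|q|>C\Delta$ implies that $\log\bigl(Z_G(q,-1)/q^{|V|}\bigr)$ defines a holomorphic function there, so $ch(G,q)=Z_G(q,-1)$ is non-vanishing in that region, which is the desired conclusion. I expect the serious work to lie not in producing \emph{some} constant $C$ (any crude cluster expansion bound suffices) but in extracting a constant $C<8$, which forces the use of a sharp self-avoiding/tree enumeration and a careful optimization of the Kotecky--Preiss parameter.
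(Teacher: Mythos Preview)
Your outline is correct and is, in fact, Sokal's own proof: rewrite $ch(G,q)/q^{|V|}$ as a hard-core polymer gas whose polymers are connected subgraphs on $\ge 2$ vertices with activities $(-1)^{|E(S)|}q^{1-|V(S)|}$, then feed a tree-counting bound on the number of connected subgraphs through a Dobrushin/Koteck\'y--Preiss criterion and optimize the free parameter to land just below $8$.

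Note, however, that the paper does \emph{not} prove Theorem~\ref{Sokal}; it is quoted from \cite{sok}. What the paper proves is the generalization Theorem~\ref{f-Sokal}, and its method, while in the same spirit, is organized differently. Instead of taking connected edge-subgraphs as polymers, the paper forms an auxiliary graph $\tilde G$ whose vertices are \emph{vertex subsets} $S\subseteq V(G)$ with $|S|\ge 2$ (adjacent when they intersect), assigns weights $w(S)=b(S)/q^{|S|-1}$ coming from the exponential-type data $b=a_1$, and then applies Dobrushin's lemma directly to the multivariate independence polynomial $I(\tilde G,\underline w)=q^{-n}f_b(G,q)$. No cluster/Mayer expansion of the logarithm is taken; zero-freeness is read off straight from Dobrushin. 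The convergence input is the ``bounded exponential type'' hypothesis $\sum_{v\in S,\,|S|=t}|b(S)|\le R(\Delta)^{t-1}$, which for the chromatic polynomial is satisfied with $R(\Delta)=e\Delta$ (this is where the tree/Penrose counting is hidden). Optimizing gives $c<7.04$, but the resulting bound on chromatic roots is $cR(\Delta)\approx 7.04\,e\,\Delta\approx 19\Delta$, not $8\Delta$; the paper explicitly flags that its constant is ``much weaker''. So your edge-polymer route is what actually delivers $C<8$, while the paper trades sharpness of the constant for a framework that works uniformly for all bounded-exponential-type polynomials.
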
 

Later Jackson, Procacci and Sokal \cite{jps} extended this result to a more
general graph polynomial, the Tutte polynomial.  In Theorem~\ref{f-Sokal}, we
shall further generalize Theorem~\ref{Sokal} to a wide class of graph
polynomials. This will set  the stage for our main result, Theorem~\ref{main}.

\begin{Def}\label{graphpol} A \textit{graph polynomial} is a mapping $f$ that
  assigns to every  finite simple graph $G$ a polynomial $f(G,x)\in\C[x]$. The
  graph polynomial  $f$ is \textit{monic} if the polynomial $f(G,x)$ is monic
  of degree $|V(G)|$ for all graphs $G$. 
\end{Def}

\begin{Def}\label{exponential} We say that the graph polynomial $f$ is
  \textit{of exponential  type} if  $f(\emptyset,x)=1$  and for every graph
  $G=(V(G),E(G))$ we have  
\begin{equation}
\sum_{S\subseteq V(G)}f(S,x)f(G- S,y)=f(G,x+y),
\end{equation}
where $f(S,x)=f(G[S],x)$ and  $f(G- S,y)=f(G[V(G)\setminus S],y)$ are the 
polynomials of the subgraphs of $G$ induced by the sets $S$ and 
$V(G)\setminus S$, respectively.
\end{Def}

In the next  definition, we introduce a boundedness condition that is
relatively easy to check and  implies a  Sokal-type bound on the  roots.

\begin{Def} \label{cond-exp} Assume that 
$$f(G,x)=\sum_{k=0}^n a_k(G)x^k$$ 
is a monic graph polynomial of exponential type. Assume that there
is a function $R:\N\to[0,\infty)$ not depending on $G$ such that for any graph
$G$ with all  degrees at most $\Delta$, any vertex $v\in V(G)$ and any 
$t\ge 1$ we have
$$\sum_{v\in S\subseteq V(G) \atop |S|=t}|a_1(G[S])|\leq R(\Delta)^{t-1}.$$
In this case we say that the graph polynomial $f(G,x)$ is  \textit{of bounded 
exponential type} with bounding function $R$.  
\end{Def}

\begin{Rem}\label{examples} Examples of  graph polynomials of bounded
  exponential type include the chromatic polynomial, the Tutte polynomial, the
  (modified) matching  polynomial, the adjoint polynomial and the Laplacian
  characteristic polynomial. Cf.\ Remark~\ref{ex} and Proposition~\ref{bounded}.
\end{Rem}

By imitating Sokal's proof, we will prove the following extension of his
theorem  (note, however, that our constant is much weaker).

\begin{Th} \label{f-Sokal} Let $f(G,x)$ be a  graph polynomial of bounded
exponential type with bounding function $R>0$. Let $G$ be a graph of largest
degree  $\le\Delta$. Then the absolute value of any root of $f(G,z)$ is less
than $cR(\Delta)$, where $c<7.04$.   
\end{Th}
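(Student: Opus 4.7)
The plan is to imitate Sokal's fixed-point argument \cite{sok}. The key algebraic input comes from differentiating the exponential-type identity in $y$ at $y = 0$, which gives
\[
f'(G, x) \;=\; \sum_{T \subseteq V(G)} a_1(G[T]) \, f(G - T, x). \qquad (\dagger)
\]
The condition $f(\emptyset, x) = 1$ yields $a_1(G[\emptyset]) = 0$, while monicity combined with the exponential-type relation forces $f(G[\{v\}], x) = x$ and hence $a_1(G[\{v\}]) = 1$. Thus the singleton terms in $(\dagger)$ contribute exactly $\sum_v f(G - v, x)$, and the terms with $|T| \ge 2$ are governed by the bounded-exponential-type estimate $\sum_{v \in T,\, |T| = t} |a_1(G[T])| \le R(\Delta)^{t - 1}$.

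I would proceed by induction on $n = |V(G)|$, propagating both the nonvanishing $f(G, x) \ne 0$ for $|x| \ge c R(\Delta)$ and a uniform single-vertex ratio bound
\[
\left| \frac{f(G - v, x)}{f(G, x)} \right| \;\le\; \frac{\gamma}{|x|} \qquad \text{for every } v \in V(G),
\]
with $\gamma > 1$ an explicit constant to be determined by the fixed-point analysis. For any $T = \{v_1, \dots, v_t\}$ the ratio $f(G - T, x)/f(G, x)$ telescopes as a product of $t$ single-vertex-deletion ratios on progressively smaller induced subgraphs; all factors but the first involve graphs strictly smaller than $G$, so by the inductive hypothesis
\[
\left| \frac{f(G - T, x)}{f(G, x)} \right| \;\le\; \rho \cdot \left(\frac{\gamma}{|x|}\right)^{t-1}, \qquad \rho := \max_{v \in V(G)} \left| \frac{f(G - v, x)}{f(G, x)} \right|.
\]
Dividing $(\dagger)$ by $f(G, x)$, grouping the $|T| \ge 2$ terms by a distinguished vertex $v \in T$, and applying the bounded-exponential-type estimate yields a self-consistent inequality relating $\rho$ to itself through a convergent geometric series in $R(\Delta)/|x|$. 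A standard open/closed continuity argument, bootstrapping from the trivial regime $|x| \to \infty$, then closes the induction for $|x| \ge c R(\Delta)$ and simultaneously gives the nonvanishing of $f(G, x)$ and the required ratio bound.

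The main obstacle is the numerical optimization of $c$. The self-consistent inequality is a fixed-point equation for $\rho$ in terms of $|x|/R(\Delta)$ and the free parameter $\gamma$; extracting the smallest admissible $c$ is a one-variable calculus problem for the resulting geometric-series expression. This is precisely the constant-chasing step in Sokal's argument, and carrying it out explicitly yields the bound $c < 7.04$ asserted in the theorem.
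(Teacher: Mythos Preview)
Your overall strategy---induction on $|V(G)|$ propagating a single-vertex ratio bound, closed by a geometric-series fixed point---is sound and is in fact exactly what underlies the paper's Dobrushin argument. The gap is in the algebraic input. Dividing $(\dagger)$ by $f(G,x)$ puts $f'(G,x)/f(G,x)$ on the left-hand side; this is the logarithmic derivative $\sum_i (x-\lambda_i)^{-1}$, not the deletion ratio $\rho=\max_v|f(G-v,x)/f(G,x)|$. The right-hand side is controlled in terms of $\rho$, but the left-hand side is a different quantity, so the inequality is not self-consistent in $\rho$ and the loop does not close.

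The fix is to use, instead of the derivative identity, the recursion obtained by fixing a vertex $v$ and grouping partitions by the block containing $v$ (this follows directly from Theorem~\ref{exp-bb}):
\[
f(G,x)\;=\;x\sum_{v\in S\subseteq V(G)} a_1(G[S])\,f(G-S,x).
\]
Dividing now by $x\,f(G-v,x)$, the $S=\{v\}$ term gives $1$, and each term with $|S|=t\ge 2$ contributes $a_1(G[S])\cdot f(G-S,x)/f(G-v,x)$, a ratio that lives entirely inside $G-v$ and hence telescopes into $t-1$ single-vertex ratios each bounded by $\gamma/|x|$ by the inductive hypothesis alone---no factor of $\rho$ is needed. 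The bounded-exponential-type estimate then gives the geometric series in $R(\Delta)\gamma/|x|$, and inverting yields $|f(G-v,x)/f(G,x)|\le\gamma/|x|$ directly. With this correction your proof is essentially the paper's, repackaged: the paper writes $q^{-n}f(G,q)$ as the multivariate independence polynomial $I(\tilde G,\underline w)$ on the intersection graph of subsets of size $\ge 2$ and invokes Dobrushin's lemma, whose inductive proof is precisely the ratio-bound argument you outline; Lemma~\ref{Dob-set} and the final optimization over the parameter $a$ then produce the constant $c<7.04$.
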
 

\begin{Def} We say that the graph polynomial  $f$ \textit{has bounded roots}
 if there exists a function $\tilde R:\N\to (0,\infty)$  such that for every
 $\Delta\in\N$ and every graph $G$ with all degrees at most $\Delta$, the 
roots of the polynomial $f(G,z)$ have absolute value less than $\tilde
R(\Delta)$. 
\end{Def}

According to Sokal's Theorem~\ref{Sokal}, the chromatic polynomial has
bounded roots.  By Theorem~\ref{f-Sokal},  every graph polynomial of bounded
exponential type has bounded roots.   

\subsection{Convergence}
Recently, M.\ Ab\'ert and T.\ Hubai \cite{abhu} studied the  behaviour
of chromatic measures in Benjamini--Schramm convergent graph sequences.

We recall the definition of Benjamini--Schramm convergence.
For a finite graph $G$, a finite rooted graph $\alpha$ and a positive integer
$r$, let $\mathbb{P}(G,\alpha,r)$ be the probability that the $r$-ball
centered at a uniform random vertex of $G$ is isomorphic to $\alpha$. We say
that a graph sequence $(G_n)$ of bounded degree is Benjamini-Schramm
convergent if for all finite rooted graphs $\alpha$ and $r>0$, the
probabilities $\mathbb{P}(G_n,\alpha,r)$ converge. This means that one cannot
distinguish $G_n$ and $G_{n'}$ for large $n$ and $n'$ by sampling them from a
random vertex with a fixed radius of sight.

Our main interest is in the behaviour of the roots of $f(G_n,z)$ as
$n\to\infty$. For our main result, we need to put  two further restrictions
on $f$. 

\begin{Def}\label{isom-inv}
The graph polynomial $f$ is
\textit{isomorphism-invariant} if   $G_1\simeq G_2$ implies 
$f(G_1,x)=f(G_2,x)$.
\end{Def}

\begin{Def}\label{multiplicativity}
The graph polynomial $f$ is
\textit{multiplicative} if   
$$f(G_1\uplus G_2,x)=f(G_1,x)f(G_2,x),$$
where $G_1\uplus G_2$ denotes the disjoint union of the graphs $G_1$ and $G_2$.
\end{Def}

The examples in Remark~\ref{examples} are all isomorhism-invariant and
multiplicative (in addition to  being monic, of exponential type, and having
bounded roots). 

Now we are ready to state our main result.

\begin{Th} \label{main} Let $f$ be an isomorphism-invariant monic
multiplicative graph polynomial of exponential type. Assume that $f$ has
bounded roots.

Let $(G_n)$ be a Benjamini-Schramm convergent graph sequence.
Let $K\subset\mathbb C$ be a compact set  containing all roots of $f(G_n,x)$
for all $n$, such that $\mathbb C\setminus K$ is connected.

\begin{itemize}
\item[(a)] For a graph $G$, let $\mu_{G}$ be the uniform distribution on
the roots of $f(G,x)$. Then for every continuous function
$g:K\rightarrow \mathbb{R}$ that is harmonic on the interior of $K$, the
sequence
$$\int_Kg(z)d\mu_{G_n}(z)$$
converges.

Moreover, for any open set $\Omega\subseteq \mathbb R^d$ and any continuous
function
$g:K\times \Omega\rightarrow \mathbb{R}$ that is  harmonic on
the interior of $K$ for any fixed $\xi\in \Omega$ and harmonic on $\Omega$
for any fixed $z\in K$, the sequence
$$\int_Kg(z,\xi)d\mu_{G_n}(z)$$
converges, locally uniformly in $\xi\in\Omega$, to a harmonic function on
$\Omega$.

\item[(b)] For  $\xi\in
 \C\setminus  K$, let
$$t_{n}(\xi)=\frac{\log | f(G_n,\xi)|}{|V(G_n)|}.$$
Then $t_{n}(\xi)$ converges to a harmonic function locally
uniformly on  $\C\setminus K$.

\end{itemize}

\end{Th}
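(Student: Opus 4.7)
The plan is to reduce part (a) to convergence of the normalized power sums $p_k(G_n)/|V(G_n)|$ for each fixed $k\ge 1$, where $p_k(G):=\sum\lambda^k$ is the $k$-th Newton power sum of the roots of $f(G,\cdot)$. Since $K$ is compact with connected complement, a Mergelyan-type theorem for harmonic functions guarantees that every continuous real-valued $g$ on $K$ which is harmonic on $K^\circ$ is the uniform limit on $K$ of real parts of holomorphic polynomials. Since each $\mu_{G_n}$ is a probability measure supported in $K$, it then suffices to prove that $\int z^k\,d\mu_{G_n}(z)=p_k(G_n)/|V(G_n)|$ converges for every $k\ge 1$.

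The exponential-type relation together with monicity should yield the exponential formula
\[ f(G,x)=\sum_{\pi\in\Pi(V(G))}x^{|\pi|}\prod_{B\in\pi}a_1(G[B]), \]
where $\Pi(V(G))$ denotes the set of partitions of $V(G)$. This follows by viewing $x\mapsto f(\cdot,x)$ as a one-parameter subgroup in the convolution algebra on graph functionals defined by $(\alpha\odot\beta)(G):=\sum_{S\subseteq V(G)}\alpha(G[S])\beta(G[V(G)\setminus S])$, whose infinitesimal generator is the graph functional $a_1$. Multiplicativity of $f$ forces $a_1(H)=0$ for disconnected nonempty $H$, so only partitions whose blocks induce connected subgraphs contribute. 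Consequently $f(G,x)/x^{|V(G)|}$ can be read as the partition function of a hard-core polymer gas on $G$: polymers are connected induced subgraphs $B\subseteq V(G)$ of size $\ge 2$, with activity $a_1(G[B])/x^{|B|-1}$, and two polymers are compatible iff disjoint.

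Since $\log\bigl(f(G,x)/x^{|V(G)|}\bigr)=-\sum_{k\ge 1}p_k(G)/(kx^k)$ formally in $1/x$, I extract $p_k(G)$ via the Mayer cluster expansion of this gas. A cluster is a multiset of polymers whose intersection graph is connected; clusters contributing to the coefficient of $1/x^k$ satisfy $\sum_i(|B_i|-1)=k$, so the support $\bigcup_iB_i$ is connected in $G$ and has at most $2k$ vertices. Distributing each cluster's contribution over its support yields
\[ p_k(G)=\sum_{v\in V(G)}\phi_k(G,v), \]
where $\phi_k(G,v)$ depends only on the isomorphism type of the ball of radius $\le 2k$ around $v$, and is uniformly bounded in terms of $k$ and the maximum degree (which is finite since $(G_n)$ is Benjamini--Schramm convergent). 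Benjamini--Schramm convergence then yields convergence of $p_k(G_n)/|V(G_n)|=\mathbb E_v[\phi_k(G_n,v)]$. For the parameter version in (a), the approximating polynomials may be chosen to depend holomorphically on $\xi$ (Runge applied parametrically), so that the limit is harmonic on $\Omega$ as a locally uniform limit of harmonic functions.

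Part (b) is then an immediate application of part (a) to $g(z,\xi):=\log|\xi-z|$ with $\Omega:=\mathbb C\setminus K$: this $g$ is continuous on $K\times\Omega$ and harmonic in each variable separately (since $\xi\ne z$), and the identity $\log|f(G_n,\xi)|/|V(G_n)|=\int_K\log|\xi-z|\,d\mu_{G_n}(z)$ identifies $t_n$ with the object whose convergence (a) provides. The main technical obstacle is the derivation of the local decomposition $p_k(G)=\sum_v\phi_k(G,v)$: this requires a careful Mayer cluster expansion for the polymer gas and the bookkeeping needed to verify that all nonvanishing contributions to the $1/x^k$ coefficient come from clusters of bounded, connected support.
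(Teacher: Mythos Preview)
Your argument is correct and reaches the same conclusion, but the route to the key step---that $p_k(G)/|V(G)|$ converges along Benjamini--Schramm sequences---differs from the paper's. You interpret $f(G,x)/x^{|V(G)|}$ as a polymer partition function and invoke the (formal) Mayer cluster expansion to read off $p_k$ as a sum over connected clusters of bounded support, which directly yields the local decomposition $p_k(G)=\sum_v\phi_k(G,v)$. The paper instead proceeds more elementarily: the exponential formula gives $e_k\in\C\mathcal{G}$; then the Newton--Girard relations together with the ring structure of $\C\mathcal{G}$ give $p_k\in\C\mathcal{G}$; finally, multiplicativity makes $p_k$ additive under disjoint union, and a two-line ``additive lemma'' (Lemma~\ref{add}, an induction on the size of non-connected $H$) forces $p_k\in\C\mathcal{C}$. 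The paper's approach uses no cluster-expansion machinery and isolates precisely which hypotheses are needed (only $e_k\in\C\mathcal{G}$ and multiplicativity, not the full exponential type), which is what yields the more general Theorem~\ref{f-conv-entr}. Your approach, on the other hand, is closer to Sokal's statistical-mechanics viewpoint and makes the bounded-radius dependence of $p_k$ completely explicit.

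One small comment on the parametric part of (a): the phrase ``Runge applied parametrically, so the approximating polynomials depend holomorphically on $\xi$'' is not quite the right justification, since $g$ is only harmonic (not holomorphic) in $\xi\in\Omega\subseteq\mathbb{R}^d$. The cleaner argument---which the paper uses---is that by joint continuity of $g$ and compactness of $K$, a single approximating polynomial $h(z)$ works uniformly on a neighbourhood of each fixed $\xi_0$; this gives local uniform convergence, and since each $\int_K g(z,\xi)\,d\mu_{G_n}(z)$ is already harmonic in $\xi$, so is its locally uniform limit.
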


The integral in (a) is called a \it harmonic moment \rm of the roots of
$f(G_n,z)$. Its convergence is referred to as the `Benjamini--Schramm
continuity' or `estimability' of the moment.
The fraction in (b) is called the \it entropy per vertex \rm or the \it free
energy \rm at $\xi$.

The phenomenon described in the Theorem was discovered, in the case of the
chromatic polynomial,  by  M.\ Ab\'ert and T.\ Hubai. The main achievement
of their paper
\cite{abhu} was to state and prove Theorem~\ref{main} for the chromatic
polynomial (in a slightly different form: they used a disc in place of our set
$K$, they used holomorphic functions in place of our harmonic ones, they took
$\log f$ in place  of $\log |f|$ in (b) --- all these 
 are inessential differences). 
They thus answered a question of Borgs and generalized a result of Borgs, 
Chayes, Kahn and Lov\'asz \cite{bckl}, who had proved convergence of $(\log
ch(G_n,q))/|V(G_n)|$
at large 
positive integers $q$.

One may naively hope for the weak convergence of the
measures $\mu_{G_n}$ arising from the roots.  As remarked by Ab\'ert and Hubai,
this is easily seen to be false already for the chromatic polynomial. Indeed,
 paths and circuits together form a
Benjamini--Schramm convergent sequence, but the chromatic measures of paths
tend to the Dirac measure at $1$ and the chromatic measures of circuits
tend to the uniform measure on the unit circle centered at $1$.   
So it was a crucial observation that the useful relaxation is to
consider the convergence of the holomorphic moments.   

To prove  the convergence of the holomorphic moments,   Ab\'ert and  Hubai
\cite{abhu} showed  (essentially) that for a finite graph $G$ and for every
$k$, the number 
$$p_k(G)=|V(G)|\int_Kz^kd\mu_{G}(z)$$
can be expressed as a fixed linear combination of the numbers $H(G)$, where 
the $H$ are non-empty \textbf{connected} finite graphs and $H(G)$ denotes the
number of subgraphs of $G$ isomorphic to $H$. 

To show this, they determined an exact expression for the power sum $p_k(G)$
of the roots in terms of homomorphism numbers. Our approach is a bit
different: without  determining  the exact expression, we use multiplicativity
of the graph polynomial  to show that the coefficient of $H(G)$ for
non-connected $H$ must be $0$. (In fact, we can determine the exact expression
too with a little extra work: see Remark~\ref{coeff-formula}.)  The argument is
simplified by using subgraph counting, which is equivalent to injective
homomorphism numbers, instead of ordinary homomorphism numbers.

Besides seeking a proof without lengthy calculations, 
we felt  it desirable to  clarify  which properties of the
chromatic polynomial are needed to prove such a result. We wished
to grasp the right concepts for the  generalization
 of the Ab\'ert--Hubai result.  This is achieved by the definitions we have
 given in this section.

\bigskip

This paper is organized as follows. In  Section~\ref{Tutte} we recall the 
Tutte polynomial and explain how and why Theorem~\ref{main} applies to it. 
In  Section~\ref{prelim} we recall some basic facts related to subgraph 
counting. In Section~\ref{multiplicative} we prove Theorem~\ref{f-conv-entr},
which will clearly cover Theorem~\ref{main} for  the most interesting graph
polynomials, including the chromatic polynomial and the Tutte polynomial. The
fact that  Theorem~\ref{f-conv-entr} covers  Theorem~\ref{main} in general
will be shown in Section~\ref{exp}.

In Section~\ref{exp} we study  graph polynomials of exponential type. This
part will consist of two subsections. In the first subsection we characterize
graph polynomials of exponential type, describe some of  their fundamental
properties and give further examples. In the second subsection  we prove
Theorem~\ref{f-Sokal}.

In Section~\ref{2-multiplicative} we introduce the notion of
2-multiplicativity, which is a stronger version of multiplicativity. This
section is not needed for the main results, but it makes the picture more
complete.

Finally, we end the paper by some concluding remarks. 
\bigskip

\noindent \textbf{Acknowledgements.}  We are grateful to Mikl\'os Ab\'ert, 
Viktor Harangi, Tam\'as Hubai, G\'abor Tardos and the other members of
the MTA R\'enyi "Lend\"ulet" Groups and Graphs Research Group for numerous
useful discussions. The  2--connectivity statement of Theorem~\ref{2-lin} was
suggested by Tam\'as Hubai (at least for the chromatic polynomial).
\bigskip

We end this section by setting down the notations.
\medskip

\noindent \textbf{Notations.} Throughout the paper we will consider only
finite simple graphs. Connected graphs are assumed to have at least one
vertex. 2-connected graphs are assumed to have at least two vertices. Let
$\mathcal G$, $\mathcal C$ and $\mathcal C_2$ denote  the class of graphs,
connected graphs and  2--connected graphs, respectively. For a graph $G$ let
$k(G)$ denote the number of connected components of $G$.

We will follow the usual notations: $G$ is a graph, $V(G)$ is  the set of its
vertices, $E(G)$ is the set of its edges,  $e(G)$ denotes the number of 
edges, $N(x)$ is the set of the neighbors of $x$, $|N(v_i)|=\deg (v_i)=d_i$ 
denotes the degree of the vertex $v_i$. We will also use the notation $N[v]$
for the closed neighborhood $N(v)\cup \{v\}$. 

For $S\subseteq V(G)$ the graph $G-S$ denotes the subgraph of $G$ induced by
the vertices $V(G)\setminus S$ while $G[S]$ denotes the subgraph of $G$ induced
by the vertex set $S$. In case of a graph polynomial $f$ we write $f(S,x)$
instead of $f(G[S],x)$ if the graph $G$ is clear from the context. If $G$ is
a graph with vertex set $V$ and edge set $E$ then $G(V,E')$ denotes the spanning
subgraph with vertex set $V$ and edge set $E'\subseteq E$.

If $S$ is a set then $|S|$ denotes its cardinality. The notation 
$S_1\uplus S_2=V$ means that $S_1\cap S_2=\emptyset$ and $S_1\cup S_2=V$.

\section{The Tutte polynomial}\label{Tutte}

A main example that Theorem~\ref{main} applies to is  the Tutte polynomial, 
answering a question raised by Ab\'ert and Hubai. The Tutte polynomial of a
graph $G$ is defined as follows:
$$T(G,x,y)=\sum_{A\subseteq E} (x-1)^{k(A)-k(E)}(y-1)^{k(A)+|A|-|V|},$$
where $k(A)$ denotes the number of connected components of the graph $(V,A)$.
In statistical physics one often studies the following form of the Tutte
polynomial:
$$Z_G(q,v)=\sum_{A\subseteq E}q^{k(A)}v^{|A|}.$$
The two forms are essentially equivalent:
$$T(G,x,y)=(x-1)^{-k(E)}(y-1)^{-|V|}Z_G((x-1)(y-1),y-1).$$
Both forms have several advantages. For instance, it is easy to generalize the
latter one to define the multivariate Tutte polynomial. Let us assign a
variable $v_e$ to each edge and set
$$Z_G(q,\underline{v})=\sum_{A\subseteq E}q^{k(A)}\prod_{e\in A}v_e.$$
We will work with the form $Z_G(q,v)$, because the degree of this polynomial
with respect to the variable $q$ is $|V(G)|$. So it will be a bit more
convenient to work with this definition. 

Note that the chromatic polynomial of the graph $G$ is
$$ch(G,x)=Z_G(x,-1)=(-1)^{|V|-k(G)}x^{k(G)}T(G,1-x,0).$$

\begin{Pro}\label{bounded}
\begin{itemize}
\item[(a)] The chromatic polynomial is multiplicative and of  bounded
  exponential type with $R(\Delta)=e\Delta$. 
\item[(b)] For any $v_0\in\C$, the Tutte polynomial $Z_G(q,v_0)$ is
  multiplicative and of bounded exponential type with
  $R(\Delta)=e\Delta(1+|v_0|)^\Delta$ .  
\end{itemize}
\end{Pro}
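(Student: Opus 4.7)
The plan is to verify, separately for $ch(G,x)$ and for $Z_G(q,v_0)$, each of multiplicativity, the exponential-type identity, and the boundedness condition of Definition~\ref{cond-exp}. Multiplicativity in both cases will be immediate: proper colorings of $G_1\uplus G_2$ split independently, and any $A\subseteq E(G_1\uplus G_2)$ decomposes uniquely as $A_1\uplus A_2$ with $k(A)=k(A_1)+k(A_2)$ and $|A|=|A_1|+|A_2|$, so the defining sum factors. For the exponential-type identity of $ch$, I would partition $\{1,\dots,x+y\}$ into blocks of sizes $x$ and $y$ (for positive integers $x,y$), let $S$ be the preimage of the first block, and note that cross-edges between $S$ and $V\setminus S$ are automatically properly colored; polynomial interpolation then extends the identity to $\C$. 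For $Z_G(\cdot,v_0)$ I would use the Fortuin--Kasteleyn representation
$$Z_G(q,v_0)=\sum_{c:V\to\{1,\dots,q\}}\prod_{uv\in E}(1+v_0\,\delta_{c(u),c(v)}),$$
valid for positive integers $q$, and run the analogous color-splitting argument: cross-edges contribute factor $1$ because the two blocks of colors are disjoint.

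For the bounding condition, I would first observe that $f(H,0)=0$ whenever $V(H)\ne\emptyset$; combined with multiplicativity this forces $x^2\mid f(G[S],x)$ whenever $G[S]$ is disconnected, so $a_1(G[S])=0$ except when $G[S]$ is connected. For a connected $H$ on $t$ vertices, Whitney's broken-circuit theorem gives $|a_1(H)|\le\tau(H)$ in the chromatic case (broken-circuit-free sets of size $t-1$ are spanning trees), while for $Z$ the triangle inequality yields the crude bound
$$|a_1(H)|=\Bigl|\sum_{\substack{A\subseteq E(H)\\(V(H),A)\text{ connected}}} v_0^{|A|}\Bigr|\le(1+|v_0|)^{e(H)}\le(1+|v_0|)^{t\Delta/2}.$$
Summing over $S\ni v$ with $|S|=t$, in the chromatic case $\sum_S\tau(G[S])$ is exactly the number of $t$-vertex subtrees of $G$ through $v$ (each such subtree is the spanning tree of the induced subgraph on its vertex set); in the Tutte case the number of connected induced $G[S]$ is bounded by the same quantity. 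Invoking the standard bound $(e\Delta)^{t-1}$ on the number of $t$-vertex subtrees of $G$ through a fixed vertex (cf.\ \cite{bckl}), this gives $R(\Delta)=e\Delta$ for $ch$, and
$$\sum_{v\in S,\,|S|=t}|a_1(G[S])|\le(e\Delta)^{t-1}(1+|v_0|)^{t\Delta/2}\le\bigl(e\Delta(1+|v_0|)^\Delta\bigr)^{t-1}$$
for $Z$, as soon as $t\Delta/2\le(t-1)\Delta$, i.e., $t\ge 2$; the case $t=1$ is trivial on both sides.

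The main obstacle is the subtree-counting bound itself---showing that there are at most $(e\Delta)^{t-1}$ subtrees of $G$ on $t$ vertices through a fixed vertex when $G$ has maximum degree at most $\Delta$. Everything else is direct from the definitions or a brief inequality check at the end.
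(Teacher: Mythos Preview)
Your argument is correct and considerably more detailed than the paper's own proof, which consists of the single sentence ``Multiplicativity and exponential type are evident from the definition. The boundedness of these polynomials can be found in the papers \cite{sok,jps} in a stronger form.'' You have essentially reconstructed a streamlined version of what those references do: reduce the boundedness condition to counting subtrees through a fixed vertex, and then invoke the $(e\Delta)^{t-1}$ bound. The chromatic case via Whitney's theorem (broken-circuit-free sets of size $t-1$ are spanning trees, so $|a_1(H)|\le\tau(H)$, and then $\sum_{S\ni v}\tau(G[S])$ counts subtrees through $v$) is exactly the clean route; for the Tutte case the crude bound $|a_1(H)|\le(1+|v_0|)^{e(H)}$ together with $e(G[S])\le t\Delta/2$ and the exponent check $t\Delta/2\le(t-1)\Delta$ for $t\ge 2$ is fine. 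Your identification of the subtree-counting lemma as the only nontrivial ingredient is accurate; that lemma is standard and appears (with various constants) in \cite{sok} and \cite{bckl}.
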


\begin{proof} Multiplicativity and exponential type are evident
from the  definition. The boundedness of these polynomials can
be found in the papers \cite{sok,jps} in a stronger form.
\end{proof}

\begin{Cor} 
Theorem~\ref{main} applies to the Tutte polynomial 
$f(G,q)=Z_G(q,v_0)$ for any fixed $v_0\in\C$ and, in particular, to the
chromatic  polynomial.
\end{Cor}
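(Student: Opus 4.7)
The plan is to verify, one by one, each hypothesis that Theorem~\ref{main} imposes on the graph polynomial $f(G,q)=Z_G(q,v_0)$. These hypotheses are: isomorphism-invariance, monicness, multiplicativity, exponential type, and bounded roots. Three of these (multiplicativity, exponential type, and the stronger property of bounded exponential type with bounding function $R(\Delta)=e\Delta(1+|v_0|)^\Delta$) are handed to us directly by Proposition~\ref{bounded}(b). The remaining two are easy to extract from the definition $Z_G(q,v_0)=\sum_{A\subseteq E}q^{k(A)}v_0^{|A|}$, and the passage from bounded exponential type to bounded roots is supplied by Theorem~\ref{f-Sokal}.

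First I would observe that $Z_G(q,v_0)$ depends on $G$ only through its isomorphism class, since the sum over spanning subgraphs $A\subseteq E(G)$ and the connected component count $k(A)$ are evidently isomorphism invariants; this yields isomorphism-invariance. Next I would check monicness: the term $A=\emptyset$ contributes $q^{|V(G)|}$ (since each vertex is its own component, $k(\emptyset)=|V(G)|$), and every other $A$ satisfies $k(A)<|V(G)|$, so as a polynomial in $q$ the leading term is $q^{|V(G)|}$ with coefficient $1$.

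Then I would cite Proposition~\ref{bounded}(b) to conclude that $f$ is multiplicative and of bounded exponential type with bounding function $R(\Delta)=e\Delta(1+|v_0|)^\Delta$. Bounded exponential type is a stronger property than exponential type, so in particular $f$ is of exponential type. Applying Theorem~\ref{f-Sokal} with this $R$, every root of $Z_G(q,v_0)$ for $G$ of maximum degree $\le\Delta$ has absolute value less than $cR(\Delta)$ with $c<7.04$, so setting $\tilde R(\Delta)=cR(\Delta)$ shows that $f$ has bounded roots in the sense of the corresponding definition.

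All hypotheses of Theorem~\ref{main} being verified, the theorem applies to $f(G,q)=Z_G(q,v_0)$. For the chromatic polynomial, it suffices to specialize $v_0=-1$, using the identity $ch(G,x)=Z_G(x,-1)$ recorded just before Proposition~\ref{bounded}, which shows that the chromatic polynomial is (up to nothing) a special case already covered. There is no real obstacle here; the whole argument is a bookkeeping check, with the one tiny subtlety being to notice that ``bounded exponential type'' from Proposition~\ref{bounded} gives ``exponential type'' and, via Theorem~\ref{f-Sokal}, also gives ``bounded roots,'' so that nothing additional has to be proved for the corollary itself.
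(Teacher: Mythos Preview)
Your proposal is correct and follows exactly the route the paper intends: the Corollary is stated without a separate proof because it is immediate from Proposition~\ref{bounded}(b) (giving multiplicativity and bounded exponential type, hence exponential type) together with Theorem~\ref{f-Sokal} (converting bounded exponential type into bounded roots), while isomorphism-invariance and monicness are read off directly from the definition of $Z_G(q,v_0)$. Your write-up simply makes these implicit verifications explicit.
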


This recovers the main result of \cite{abhu}, and answers  a question raised
there.

\section{Preliminaries on subgraph counting}\label{prelim}

We write $H(G)$, resp.\ $H^*(G)$  for the number of subgraphs, resp.\ induced 
subgraphs of $G$ isomorphic to $H$. Thus, by a slight abuse of notation, $
H$ denotes not only a graph but also an integer-valued, 
isomorphism-invariant function on graphs.

It is well known that if $H$ runs over the isomorhism classes of graphs, then
the  functions $H:G\mapsto H(G)$, resp.\ $H^*:G\mapsto H^*(G)$ are linearly
independent. See, for example, Lemma 4.1 in \cite{bckl}.

 Any family $\mathcal H$ of isomorphism types
of graphs gives rise to the $\C$--vector space $\C \mathcal H$, resp.\ $\C
\mathcal H^*$ generated by the functions $H$, resp.\ $H^*$, where
$H\in\mathcal H$:
$$\C \mathcal H=\left\{\sum_{H\in \mathcal{H}}c_HH(\cdot)\ |\ c_H\in \C\right\},$$
where all sums are finite.

\begin{Def} The family $\mathcal H$ is \textit{ascending} if it is closed
  under the operation of adding new edges (but no new vertices) to a graph.
\end{Def}

We need the following well-known facts.

\begin{Fact}\label{sub-ind}
If $\mathcal H$ is ascending, then $\C \mathcal H=\C \mathcal H^*$.
\end{Fact}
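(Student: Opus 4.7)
My plan is to establish a triangular change-of-basis between the functions $H(\cdot)$ and $H^*(\cdot)$ and show that, when $\mathcal H$ is ascending, both directions stay inside $\mathcal H$. First I would organize subgraphs of $G$ isomorphic to $H$ by their vertex set: for each subset $S\subseteq V(G)$ of size $|V(H)|$, the number of subgraphs of $G$ with vertex set $S$ that are isomorphic to $H$ depends only on the isomorphism class of $G[S]$. Writing $s(H,H')$ for the number of spanning subgraphs of (any representative of) $H'$ that are isomorphic to $H$, this yields the identity
$$H(G)=\sum_{H'} s(H,H')\,H'^*(G),$$
where $H'$ ranges over isomorphism classes of graphs on $|V(H)|$ vertices. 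The key observation is that $s(H,H')\ne 0$ forces $H'$ to contain $H$ as a spanning subgraph, i.e.\ $H'$ is obtainable from $H$ by adding edges. Hence if $\mathcal H$ is ascending and $H\in\mathcal H$, every $H'$ contributing to the sum also lies in $\mathcal H$, giving $\C\mathcal H\subseteq \C\mathcal H^*$.

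For the reverse inclusion I would invert this identity. Fix $n=|V(H)|$ and order isomorphism classes of graphs on $n$ vertices by number of edges. The matrix $\bigl(s(H,H')\bigr)$ is then upper-triangular (a spanning subgraph with the same edge count as the host must coincide with it, so $s(H,H')=0$ when $H'$ has fewer edges than $H$), and all diagonal entries equal $1$, because the only spanning subgraph of $H$ isomorphic to $H$ is $H$ itself. Thus the matrix is invertible over $\Z$, and the inverse relation expresses
$$H^*(G)=\sum_{H'} t(H,H')\,H'(G)$$
with $t(H,H')\ne 0$ only for $H'$ obtained from $H$ by adding edges. Again ascendingness of $\mathcal H$ forces each such $H'$ into $\mathcal H$, giving $\C\mathcal H^*\subseteq\C\mathcal H$, hence equality.

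The main obstacle is really only careful bookkeeping: verifying that $s(H,H')$ vanishes outside the up-set generated by $H$ in the edge-inclusion order, and that the diagonal entries are $1$ so that the change of basis is an honest triangular matrix with integer inverse. Once that is set up, the statement follows immediately from the defining property of an ascending family; there is no deeper analytic or combinatorial content beyond a Möbius-type inversion on the poset of graphs on a fixed vertex set.
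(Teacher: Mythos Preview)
Your argument is correct and is the standard one; the paper itself states Fact~\ref{sub-ind} as ``well-known'' and gives no proof, so there is nothing to compare against beyond noting that you have supplied exactly the triangular change-of-basis argument one expects.

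One small point of bookkeeping deserves a sentence more than you gave it. From upper-triangularity with respect to \emph{edge count} alone you only get that $t(H,H')\ne 0$ implies $|E(H')|\ge|E(H)|$, not that $H'$ lies in the up-set of $H$ under the spanning-subgraph order. What you actually need (and implicitly use) is that matrices supported on the partial order ``$H$ is isomorphic to a spanning subgraph of $H'$'' with $1$'s on the diagonal form a group: writing $s=I+N$ with $N$ supported strictly above the diagonal in this order, the inverse $I-N+N^2-\cdots$ is a finite sum and each term is again supported on the order. This is routine incidence-algebra reasoning, and your final paragraph shows you are aware the support lies in the up-set; just make the passage from $s$ to its inverse explicit.
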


\begin{Fact}\label{ring}
$$H_1(G)\cdot H_2(G)=\sum_H c_{H_1,H_2}^HH(G),$$
where the coefficient $c_{H_1,H_2}^H$ is the number of decompositions of $H$
as a (not necessarily disjoint) union of $H_1$ and $H_2$. This number is a
non-negative integer and  it is zero for all but finitely many isomorphism
types $H$. 
\end{Fact}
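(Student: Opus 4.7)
The plan is to prove Fact 3.3 by a double-counting argument: interpret the product $H_1(G)\cdot H_2(G)$ as counting ordered pairs of subgraphs, then reorganize this count by the isomorphism type of the union.

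First, I would set up notation carefully. By definition, $H_i(G)$ is the number of subgraphs $S_i$ of $G$ (i.e., pairs $(V(S_i),E(S_i))$ with $V(S_i)\subseteq V(G)$ and $E(S_i)\subseteq E(G)$) such that $S_i\cong H_i$. Consequently
\[
H_1(G)\cdot H_2(G)=\bigl|\{(S_1,S_2):S_1,S_2\text{ subgraphs of }G,\ S_1\cong H_1,\ S_2\cong H_2\}\bigr|.
\]
For every such pair, form the union $S:=S_1\cup S_2$, meaning $V(S)=V(S_1)\cup V(S_2)$ and $E(S)=E(S_1)\cup E(S_2)$. This $S$ is again a subgraph of $G$.

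Next, I would group the pairs $(S_1,S_2)$ according to the isomorphism type $H$ of their union $S$. For any subgraph $S$ of $G$ with $S\cong H$, the number of decompositions $S=S_1\cup S_2$ with $S_i\cong H_i$ depends only on the isomorphism type $H$; denote this number by $c_{H_1,H_2}^H$. (Equivalently, $c_{H_1,H_2}^H$ is the number of ordered pairs $(H_1',H_2')$ of subgraphs of a fixed representative of $H$ with $H_i'\cong H_i$ and $H_1'\cup H_2'=H$.) Summing over subgraphs $S$ of $G$ and then over isomorphism types $H$ yields
\[
H_1(G)\cdot H_2(G)=\sum_H c_{H_1,H_2}^H\cdot H(G),
\]
which is the asserted identity. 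Non-negativity and integrality of $c_{H_1,H_2}^H$ are immediate from the definition.

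Finally, for finiteness of the support: if $(S_1,S_2)$ contributes to some term $H$, then $|V(H)|=|V(S_1\cup S_2)|\le |V(H_1)|+|V(H_2)|$ and similarly $|E(H)|\le |E(H_1)|+|E(H_2)|$, so only finitely many isomorphism types $H$ can occur with $c_{H_1,H_2}^H>0$.

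There is no real obstacle here; the statement is a piece of combinatorial bookkeeping, and the only care needed is to be precise about what counts as a ``subgraph'' (labelled pair of vertex and edge sets, not an isomorphism class) so that $H_1(G)\cdot H_2(G)$ genuinely enumerates ordered pairs of such subgraphs and the grouping by union type produces the stated coefficient.
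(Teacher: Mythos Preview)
Your proof is correct. The paper does not actually supply a proof of this Fact; it is stated as one of two ``well-known facts'' in Section~\ref{prelim} and is used without justification. Your double-counting argument---interpreting $H_1(G)\cdot H_2(G)$ as ordered pairs of subgraphs, grouping by the isomorphism type of their union, and observing that the number of decompositions depends only on that type---is exactly the standard verification one would give, and your bound $|V(H)|\le |V(H_1)|+|V(H_2)|$ for the finiteness of support is the expected one.
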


\begin{Cor}\label{ringcoroll}
If $\mathcal H$ is closed under unions, then $\C \mathcal H$ is a ring.
\end{Cor}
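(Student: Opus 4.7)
The plan is to observe that the conclusion follows almost immediately from Fact \ref{ring}. The space $\C\mathcal H$ is a vector space by definition, so all that needs to be checked is that it is closed under pointwise multiplication.

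First I would take two basis elements $H_1, H_2 \in \mathcal H$ and apply Fact \ref{ring} to write
\[ H_1(G)\cdot H_2(G) = \sum_H c_{H_1,H_2}^H H(G), \]
where the sum is finite and each nonzero coefficient $c_{H_1,H_2}^H$ counts the number of ways to realize $H$ as a union of copies of $H_1$ and $H_2$. In particular, if $c_{H_1,H_2}^H \ne 0$ then $H$ is (isomorphic to) a union of $H_1$ and $H_2$.

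Next, invoking the assumption that $\mathcal H$ is closed under unions, every such $H$ lies in $\mathcal H$. Hence the product $H_1\cdot H_2$ is a finite $\C$-linear combination of basis elements of $\C\mathcal H$, so $H_1 \cdot H_2 \in \C\mathcal H$.

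Finally, I would extend from basis elements to arbitrary elements of $\C\mathcal H$ by bilinearity of pointwise multiplication; together with the vector space structure this shows $\C\mathcal H$ is closed under multiplication, hence is a (commutative, associative) subring of the ring of $\C$-valued functions on graphs. There is no real obstacle here: the entire content is packaged in Fact \ref{ring}, and the closure-under-unions hypothesis is precisely what is needed to keep the support of the product inside $\mathcal H$.
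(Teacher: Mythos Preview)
Your proof is correct and follows exactly the intended approach: the paper states this corollary immediately after Fact~\ref{ring} without giving a separate proof, as it is meant to be an immediate consequence. Your write-up simply makes explicit the one-line reasoning the paper leaves implicit.
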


\section{Multiplicative lemma}\label{multiplicative}

In this section it is shown how multiplicativity implies Benjamini--Schramm 
continuity. The main result will be Theorem~\ref{f-conv-entr}. 
Its proof is made simple by the ideas included in Lemmas~\ref{add} and 
\ref{mult}.

\begin{Def} A function $p$ defined on graphs is \textit{additive} if 
$$p(G_1 \uplus G_2)=p(G_1)+p(G_2)$$
holds for any two graphs $G_1$ and $G_2$. 
\end{Def}

\begin{Lemma} \label{add} (Additive lemma.)  A function  $p\in \C\mathcal G$ 
is additive if and only if  $p\in \C\mathcal C$.
\end{Lemma}

\begin{proof} 
The `if' part follows from the evident fact that $H(G)$ is additive (in $G$)
for any connected $H$.

We prove the `only if'  part. Let 
$$p(G)=\sum_Hc_H\cdot H(G)$$
for every graph $G$. We know that $p$ is additive and 
we need to prove that $c_H=0$ for all non-connected graphs $H$. Using the `if'
part, we may assume that  $c_H=0$ for all connected graphs $H$.

Let  $H$ be a non-connected graph. Then $H$ is the
disjoint union of  graphs $H_1$ and $H_2$ with at least 1 vertex. 
By induction, we may assume that $c_{H'}=0$ for all proper subgraphs $H'$ 
of $H$.  Then 
$$p(H_i)=\sum_{H'}c_{H'}\cdot H'(H_i)=\sum 0=0$$
for $i=1,2$ and 
$$p(H)=\sum_{H'}c_{H'}\cdot H'(H)=c_{H}\cdot H(H)=c_H.$$
By additivity we get $c_H=0$.
\end{proof}

\begin{Lemma} \label{mult} (Multiplicative lemma.) Assume that $f$ is a
multiplicative graph polynomial such that $f(G,x)$ is not the zero polynomial
for any graph $G$.    Let $\lambda_1(G),\dots ,\lambda_{n}(G)$ denote the
roots of the polynomial $f(G,x)$, where $n=\deg f(G,x)$. Assume that for some
$k$ the $k$--th power sum $p_k$ of the roots is in  $\C\mathcal G$, i.e.,
there exist constants $c_k(H)$, nonzero for only finitely many isomorphism
types $H$, such that 
$$p_k(G)=\sum_{i=1}^{n}\lambda_i(G)^k=\sum_Hc_k(H) \cdot H(G)$$
for every graph $G$. Then $p_k\in\C\mathcal C$, i.e., $c_k(H)=0$ for
non-connected graphs $H$. 
\end{Lemma}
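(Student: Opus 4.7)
The plan is to reduce this directly to the Additive Lemma (Lemma~\ref{add}). The key observation is that multiplicativity of $f$ forces additivity of the power sum $p_k$ as a function on graphs, and the rest is an immediate invocation of Lemma~\ref{add}.

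Concretely, I would proceed as follows. First I would unpack what multiplicativity means for roots. Since $f(G_1\uplus G_2,x)=f(G_1,x)f(G_2,x)$, and both factors are nonzero polynomials by hypothesis, the multiset of roots (counted with multiplicity) of the product is the disjoint union of the multisets of roots of the two factors. Consequently $\deg f(G_1\uplus G_2,x)=\deg f(G_1,x)+\deg f(G_2,x)$, and
\[
p_k(G_1\uplus G_2)=\sum_i\lambda_i(G_1\uplus G_2)^k=\sum_i\lambda_i(G_1)^k+\sum_j\lambda_j(G_2)^k=p_k(G_1)+p_k(G_2).
\]
Thus $p_k$ is an additive function on graphs in the sense of the definition just above Lemma~\ref{add}.

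Next, by assumption $p_k\in\C\mathcal G$, so the hypothesis of Lemma~\ref{add} is met. Applying that lemma yields $p_k\in\C\mathcal C$, which is exactly the statement that the coefficients $c_k(H)$ vanish for every non-connected $H$.

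There is really no obstacle here: the only thing to check carefully is that multiplicativity of $f$ does translate into additivity of $p_k$ (and not, say, multiplicativity of some other quantity). This is immediate from the fact that the roots of a product of two polynomials form the union of the respective root multisets, together with the nonvanishing assumption on $f$. Everything else is already encoded in Lemma~\ref{add}.
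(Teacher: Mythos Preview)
Your proof is correct and is essentially identical to the paper's own argument: observe that multiplicativity of $f$ makes $p_k$ additive, then invoke Lemma~\ref{add}. The paper's proof is the one-line version of what you wrote.
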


\begin{proof} The function  $G\mapsto p_k(G)$ is additive, so the statements
  follow from Lemma~\ref{add}. 
\end{proof}

The main result of this section, Theorem~\ref{f-conv-entr}, will be an easy 
corollary of Lemma~\ref{mult}.

We recall  the following well-known fact concerning  Benjamini-Schramm
convergence.

\begin{Fact} 
Let $(G_n)$ be a graph sequence of bounded degree. Then $(G_n)$ is
Benjamini-Schramm convergent if and only if for every (finite, non-empty)
connected graph  $H$, the sequence
$$\frac{H(G_n)}{|V(G_n)|}$$
converges.
\end{Fact}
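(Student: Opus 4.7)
The plan is to prove both implications: the forward direction $(\Rightarrow)$ by a short diameter argument, and the converse $(\Leftarrow)$ by expressing each neighborhood probability $\mathbb{P}(G_n,\alpha,r)$ as a fixed $n$-independent linear combination of normalized connected subgraph counts, which reduces (via the bounded degree) to inverting a finite matrix indexed by local rooted types.

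For $(\Rightarrow)$, I fix a connected graph $H$ on $h$ vertices with diameter at most $d$. Since $H$ is connected, every copy of $H$ in $G_n$ through a vertex $v$ lies inside $B_d(G_n,v)$, so double counting gives
$$h\cdot H(G_n)=\sum_{v\in V(G_n)}\#\{\text{copies of }H\text{ in }G_n\text{ through }v\}=\sum_{\alpha} N(G_n,\alpha,d)\,c(H,\alpha),$$
where $\alpha$ ranges over the (finitely many) isomorphism classes of rooted graphs of radius at most $d$ and maximum degree at most $\Delta$, $N(G_n,\alpha,d)$ counts vertices whose rooted $d$-ball is isomorphic to $\alpha$, and $c(H,\alpha)$ counts copies of $H$ in $\alpha$ through the root. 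Dividing by $|V(G_n)|$ writes $H(G_n)/|V(G_n)|$ as a finite linear combination of the quantities $\mathbb{P}(G_n,\alpha,d)$, each of which converges by hypothesis.

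For $(\Leftarrow)$, I first reformulate the hypothesis in rooted language: summing over $v\in V(G_n)$ the number of rooted injective graph homomorphisms from a rooted connected graph $\alpha=(H,o)$ to $(G_n,v)$ yields the total number of injective homomorphisms $H\to G_n$, namely $|\mathrm{Aut}(H)|\cdot H(G_n)$. So convergence of $H(G_n)/|V(G_n)|$ for every connected $H$ is equivalent to convergence, for every rooted connected $\alpha$, of
$$\rho_n(\alpha):=\frac{1}{|V(G_n)|}\sum_{v\in V(G_n)}\#\{\text{rooted injective homomorphisms }\alpha\to(G_n,v)\}.$$
Now fixing $r$ and letting $\mathcal B_r$ denote the finite set of isomorphism classes of rooted connected graphs of radius at most $r$ and maximum degree at most $\Delta$, any rooted injective homomorphism from $\alpha\in\mathcal B_r$ into $(G_n,v)$ lies inside $B_r(G_n,v)$. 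Partitioning vertices by the rooted type $\beta\in\mathcal B_r$ of their $r$-ball therefore gives
$$\rho_n(\alpha)=\sum_{\beta\in\mathcal B_r}c(\alpha,\beta)\,\mathbb{P}(G_n,\beta,r),$$
where $c(\alpha,\beta)$ counts rooted injective homomorphisms $\alpha\to\beta$.

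The key step is the invertibility of the finite matrix $M=(c(\alpha,\beta))_{\alpha,\beta\in\mathcal B_r}$. Ordering $\mathcal B_r$ lexicographically by the pair $(|V|,e)$, one checks that $c(\alpha,\beta)=0$ whenever $|V(\alpha)|>|V(\beta)|$, or $|V(\alpha)|=|V(\beta)|$ with $e(\alpha)>e(\beta)$, while $c(\alpha,\alpha)$ equals the (positive) number of root-fixing automorphisms of $\alpha$. Hence $M$ is triangular with nonzero diagonal and therefore invertible, so each $\mathbb{P}(G_n,\beta,r)$ is an $n$-independent linear combination of the (convergent) sequences $\rho_n(\alpha)$, yielding Benjamini--Schramm convergence. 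The main obstacle is keeping the rooted-versus-unrooted linear algebra in order; once the setup above is in place, the triangularity of $M$ does the real work.
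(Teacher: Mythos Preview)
Your argument is correct. Both directions are handled cleanly: the forward direction via the standard double-counting of copies of $H$ through each vertex (grouping vertices by the isomorphism type of their $d$-ball), and the converse via the key observation that the matrix $M=(c(\alpha,\beta))_{\alpha,\beta\in\mathcal B_r}$ of rooted injective homomorphism counts is invertible. Your triangularity claim is right and in fact slightly stronger than you state: if $|V(\alpha)|=|V(\beta)|$ and $e(\alpha)=e(\beta)$, then any rooted injective homomorphism $\alpha\to\beta$ is a rooted isomorphism, so the blocks within each $(|V|,e)$-level are diagonal, not merely triangular. This removes any need to refine the order within a level.

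As for comparison with the paper: there is nothing to compare. The paper records this statement as a ``well-known fact'' and offers no proof. Your write-up thus supplies what the paper omits, and does so by the standard route (express rooted-ball frequencies and normalized connected subgraph counts as invertible linear combinations of one another over the finite alphabet forced by the degree bound). One cosmetic point: in the forward direction it is enough to take $d=|V(H)|-1$ (the trivial diameter bound for a connected graph on $h$ vertices), so you need not introduce the actual diameter of $H$.
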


\begin{Cor} \label{conv-hom} Let $(G_n)$ be a graph sequence of bounded
degree. Then $(G_n)$ is Benjamini-Schramm convergent if and only if for
every $p\in\C\mathcal C$, the sequence
$$\frac{p(G_n)}{|V(G_n)|}$$
converges.
\end{Cor}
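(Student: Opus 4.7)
The plan is to observe that this corollary is essentially a restatement of the preceding Fact, upgraded from the basis elements (connected graphs $H$) to arbitrary finite linear combinations of them.

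For the forward direction, I would assume $(G_n)$ is Benjamini--Schramm convergent. By the Fact, for every non-empty connected graph $H$ the sequence $H(G_n)/|V(G_n)|$ converges. Any $p\in\C\mathcal C$ is, by definition, a finite linear combination $p=\sum_{H\in\mathcal C}c_H H$. Hence
$$\frac{p(G_n)}{|V(G_n)|}=\sum_{H\in\mathcal C}c_H\cdot\frac{H(G_n)}{|V(G_n)|}$$
converges as a finite $\C$-linear combination of convergent sequences. (Here I should be slightly careful that the constant function $1$, which would correspond to the ``empty graph term,'' does not appear, since the paper's convention is that connected graphs have at least one vertex; but $p\in\C\mathcal C$ by definition involves only functions $H(\cdot)$ for connected $H$, so this is fine.)

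For the reverse direction, I would assume that $p(G_n)/|V(G_n)|$ converges for every $p\in\C\mathcal C$. Then in particular, taking $p=H$ for each individual non-empty connected graph $H$ (which certainly lies in $\C\mathcal C$), the sequence $H(G_n)/|V(G_n)|$ converges. By the Fact, $(G_n)$ is Benjamini--Schramm convergent.

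There is no real obstacle here: the only non-trivial input is the Fact itself, which asserts the equivalence of Benjamini--Schramm convergence with the convergence of each normalized subgraph count $H(G_n)/|V(G_n)|$ over connected $H$. The corollary just packages this using the linearity of convergence, so the proof is genuinely a one-line reduction in each direction. The bounded-degree hypothesis is used only implicitly through its role in the Fact (it guarantees, for example, that $H(G_n)/|V(G_n)|$ is bounded for each fixed $H$).
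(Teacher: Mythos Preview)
Your proposal is correct and matches the paper's intent: the paper states this as a corollary of the preceding Fact with no separate proof, treating it as immediate by linearity, which is exactly the argument you spell out. There is nothing to add.
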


\begin{Th} \label{f-conv-entr}  Let $f$ be  a multiplicative monic
graph polynomial with bounded roots.   We also assume that
$$f(G,x)=\sum_{k=0}^n(-1)^{k}e_k(G)x^{n-k},$$
where $n=|V(G)|$ and all coefficients $e_k\in\C\mathcal G$.
Let $(G_n)$ be a Benjamini-Schramm convergent graph sequence.
Let $K\subset\mathbb C$ be a compact set  containing all roots of $f(G_n,x)$
for all $n$, such that $\mathbb{C}\setminus K$ is connected.
Then the statements (a) and (b) of Theorem~\ref{main} hold.
\end{Th}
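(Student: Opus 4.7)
The plan is to reduce part (a) to convergence of the simple holomorphic moments $\int_K z^k\,d\mu_{G_n}(z)$, and then extend to general harmonic integrands by polynomial approximation. First, I would use Newton's identities to express each power sum $p_k(G)=\sum_{i=1}^{|V(G)|}\lambda_i(G)^k$ as a polynomial over $\mathbb Q$ in $e_1(G),\dots,e_k(G)$. Since each $e_k\in\C\mathcal G$ by hypothesis and $\mathcal G$ is closed under disjoint union, Corollary~\ref{ringcoroll} says $\C\mathcal G$ is a ring, so $p_k\in\C\mathcal G$. Multiplicativity of $f$, combined with Lemma~\ref{mult}, upgrades this to $p_k\in\C\mathcal C$, and Corollary~\ref{conv-hom} then yields convergence of $p_k(G_n)/|V(G_n)|=\int_K z^k\,d\mu_{G_n}(z)$ for every $k\ge 0$.

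Taking complex conjugates, $\int_K \bar z^k\,d\mu_{G_n}(z)$ converges too, so $\int_K u\,d\mu_{G_n}$ converges for every harmonic polynomial $u$, since any such $u$ is $\operatorname{Re} P(z)$ for some complex polynomial $P$. Because $\mathbb C\setminus K$ is connected, the classical harmonic approximation theorem of J.\,L.~Walsh guarantees that every continuous real-valued $g$ on $K$ which is harmonic on $K^\circ$ is a uniform limit on $K$ of harmonic polynomials. The $\mu_{G_n}$ being probability measures supported in $K$, this approximation is uniform in $n$, and the convergence of $\int_K g\,d\mu_{G_n}$ follows.

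For the parameterized form of (a), set $F_n(\xi)=\int_K g(z,\xi)\,d\mu_{G_n}(z)$. Each $F_n$ is harmonic on $\Omega$ (differentiating under the integral is legal since $g$ is harmonic in $\xi$ and $\mu_{G_n}$ has finite total mass), and $\{F_n\}$ is locally uniformly bounded on $\Omega$ by the continuity of $g$ on $K\times\Omega$. By Harnack's normality principle for harmonic functions, $\{F_n\}$ is a normal family; combined with the pointwise convergence at every $\xi\in\Omega$ supplied by the previous paragraph, this forces local uniform convergence to a harmonic limit. Part (b) is then the special case $\Omega=\mathbb C\setminus K$ and $g(z,\xi)=\log|\xi-z|$: the factorization of the monic polynomial $f(G_n,x)$ gives $t_n(\xi)=\int_K\log|\xi-z|\,d\mu_{G_n}(z)$, and this kernel is continuous on $K\times(\mathbb C\setminus K)$ and harmonic in each variable separately. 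The main subtlety, and essentially the only nontrivial analytic input, is Walsh's harmonic approximation theorem, which is precisely what lets us pass from the holomorphic moments (delivered for free by multiplicativity via Lemma~\ref{mult}) to the harmonic statement; the rest of the argument is packaged by the lemmas already established in this section.
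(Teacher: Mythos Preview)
Your proof is correct and follows essentially the same route as the paper's: Newton's identities combined with Corollary~\ref{ringcoroll} and Lemma~\ref{mult} give $p_k\in\C\mathcal C$, then harmonic polynomial approximation on $K$ (the paper cites \cite{ca}, you cite Walsh --- the same underlying result) handles general $g$, and part~(b) is the specialization $g(z,\xi)=\log|\xi-z|$. The only minor variation is in the parametric statement of~(a): the paper argues directly that a single approximating polynomial $h$ works uniformly on a neighborhood of each $\xi_0\in\Omega$, whereas you instead appeal to a normal-families argument (local uniform boundedness plus pointwise convergence); both are standard and equally valid.
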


\begin{Rem} Our main result, Theorem~\ref{main} follows from 
Theorem~\ref{f-conv-entr}, using also 
the fact
that $e_k\in \C\mathcal G$ if $f$ is isomorphism-invariant and  of exponential
type  ---  this will be proved  in Theorem~\ref{lin}(a), cf.\ also
Remark~\ref{weaklin}.
Note, however, that $e_k\in\C\mathcal G$ is trivially satisfied 
for the restricted Tutte polynomial $Z_G(q,v_0)$ and hence 
also for the chromatic polynomial.

In Theorem~\ref{f-conv-entr}, we could have required that the power sums  
$p_k$ of the roots rather than the coefficients $e_k$ be in $\C\mathcal G$.
These conditions are equivalent by the Newton--Girard--Waring formulas. In
most cases, it is easier to check the condition for the coefficients, but not
always: consider the characteristic polynomial of the adjacency matrix of the
graph $G$.    
\end{Rem}

\begin{proof}[Proof of Theorem~\ref{f-conv-entr}]
This is a suitably rephrased version of  the corresponding proof of Ab\'ert
and
Hubai.

(a) We know from elementary algebra that each $p_k$ can be expressed  as a
polynomial in the $e_i$, and thus  in the functions $ H(\cdot)$. By
Corollary~\ref{ringcoroll},  $\C\mathcal G$ is a ring, so
this polynomial can be rewritten as a finite linear combination of the
functions $H(\cdot)$. From Theorem~\ref{mult} we know that this finite
linear combination consists  of terms where the graphs $H$ are  connected.
Hence $p_k\in\C\mathcal C$.
\medskip

Let $g(z)$ be continuous on $K$ and  harmonic on the interior. We need to
prove that
$$\int_Kg(z)d\mu_{G_n}(z)$$
is  convergent.
Choose  any $\varepsilon>0$.
There exists a polynomial $$h(z)=\sum_{k=0}^Ma_kz^k$$ such that
$$\left|g(z)-\Re h(z)\right|\leq \varepsilon $$
for all $z\in  K$, see for example \cite{ca}, Lemma 3. Thus,
$$\left|\int_Kg(z)d\mu_{G}(z)-\int_K\Re h(z)d\mu_{G}(z)\right|\leq
\varepsilon $$
for all graphs $G$.
Now we have
\begin{equation*}
\int_Kh(z)d\mu_G(z)=\sum_{k=0}^{M}a_k\int_Kz^kd\mu_G(z)=\sum_{k=0}^{
M}a_k\frac{p_k(G)}{|V(G)|}.
\end{equation*}
Since $(G_n)$ was Benjamini-Schramm convergent we have that
$$\frac{p_k(G_n)}{|V(G_n)|}$$
is convergent for any fixed $k$, and therefore so is
$$\int_Kh(z)d\mu_{G_{n}}(z).$$
Hence \[(\limsup-\liminf)\int_Kg(z)d\mu_{G_n}(z)\le 2\varepsilon.\]
This holds for all $\varepsilon>0$, so the integral converges.
This completes the proof of the first statement.

For the parametric version, fix $\varepsilon$ and note that any
$\xi\in \Omega$ has a neighborhood for the points of which a common
polynomial
$h(z)$ can be used in  the above argument. Therefore, the
convergence of the integral (as $n\to \infty$) is locally uniform and hence
the limit function is harmonic in
$\xi$.

(b)
We need to prove locally uniform convergence of  $ t_{n}(\xi)$. Put $\Omega=
\mathbb{C}\setminus K$, and let
$g(z,\xi)=\log |\xi-z|$ on $K\times\Omega$.

By (a), we have that
$$\int_Kg(z,\xi)d\mu_{G_n}(z)$$
converges locally uniformly in $\xi\in\Omega$. Thus
$$ t_{n}(\xi)=\frac{\log | f(G_n,\xi)|}{|V(G_n)|}=
\frac{\displaystyle{\sum_{\lambda\ \ \mbox{root}} \log |\xi-\lambda|}}{|V(G
_n)|}=
$$
$$=\int_K \log |\xi-z|d\mu_{G_n}(z)=\int_K g (z,\xi)d\mu_{G_n}(z)$$
is locally uniformly convergent as a function of $\xi$. Since
$ t_{n}(\xi)$ is  a harmonic function by its very definition, the
harmonicity of
$\lim t_{n}(\xi)$
follows from  locally uniform convergence.
\end{proof}

\section{Graph polynomials of exponential type}\label{exp}

In this section we study  graph polynomials of exponential type. 
We divided this section into two subsections. In Subsection~\ref{fund} we 
describe some fundamental properties of these polynomials. In
Subsection~\ref{gen-Sokal} we prove the generalization of
Sokal's theorem.

\subsection{Fundamental properties of  graph polynomials of exponential type
}\label{fund}

Recall that the graph polynomial $f$ is \textit{of exponential type} if 
$f(\emptyset,x)=1$  and for every graph $G=(V(G),E(G))$ we have

\begin{equation}\label{exp-type}
\sum_{S_1\uplus S_2=V(G)}f(S_1,x)f(S_2,y)=f(G,x+y),
\end{equation} 
where $f(S_1,x)=f(G[S_1],x), f(S_2,y)=f(G[S_2],y)$ are the polynomials of
 the subgraphs of $G$ induced by the sets $S_1$ and $S_2$, respectively.
\medskip

Note that Gus Wiseman \cite{wis} calls these graph polynomials
\textit{binomial-type}. 

We will see that the chromatic polynomial is a graph polynomial of exponential
type.  In fact, this class contains surprisingly many known graph
polynomials including the Laplacian characteristic polynomial and a modified
version of the matching polynomial.

Our first aim is to describe  graph polynomials of exponential type. It will
turn out that if we have a function $b$ from the class of  
graphs with at least one vertex to the complex numbers then this function 
determines a graph polynomial of exponential type and vice-versa every  graph
polynomial of exponential type  determines such a function. 
The next theorem makes this statement more precise.

\begin{Th} \label{exp-bb} Let $b$ be a complex-valued function on the class 
of  graphs on non-empty vertex sets. Let us define the graph polynomial $f_b$ 
as follows. Set
$$a_k(G)=\sum_{\{S_1,S_2,\dots ,S_k\}\in \mathcal{P}_k}b(S_1)b(S_2)\dots b(S_k),$$
where the summation is over the set $\mathcal{P}_k$ of all  partitions of
$V(G)$ into exactly $k$ non-empty sets. Then let 
$$f_b(G,x)=\sum_{k=1}^na_k(G)x^k,$$
where $n=|V(G)|$.
Then
\begin{itemize}
\item[(a)] For any function $b$, the graph polynomial $f_b(G,x)$ is of
  exponential type. 

\item[(b)]  For any  graph polynomial $f$ of exponential type, there exists 
a graph function $b$ such that $f(G,x)=f_b(G,x)$. More precisely, if
$b(G)=a_1(G)$ is the coefficient of $x^1$ in $f(G,x)$, then $f=f_b$. 
\end{itemize}
\end{Th}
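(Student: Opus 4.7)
The plan is to prove both parts by extracting the coefficient of $x^j y^l$ from the exponential-type identity
$$\sum_{S\subseteq V(G)} f(G[S], x)\, f(G-S, y) = f(G, x+y).$$

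For part (a), I would expand both sides of this identity with $f = f_b$ and compare the coefficient of $x^j y^l$. The left side gives $\sum_{S} a_j(G[S])\, a_l(G-S)$, while on the right, $(x+y)^{j+l}$ contributes a binomial coefficient, so the coefficient is $\binom{j+l}{j} a_{j+l}(G)$. A partition $\{T_1, \ldots, T_{j+l}\}$ of $V(G)$ into $j+l$ non-empty parts, together with a choice of $j$ of its parts (whose union is $S$), corresponds bijectively to a pair consisting of a partition of $S$ into $j$ parts and a partition of $V(G)\setminus S$ into $l$ parts. There are $\binom{j+l}{j}$ such selections per partition, so the two sides of the identity agree. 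The edge case $f_b(\emptyset, x) = 1$ must be declared as a convention, since the definition sums from $k=1$.

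For part (b), write $a_k(G)$ for the coefficient of $x^k$ in $f(G, x)$ and set $b(G) := a_1(G)$. The same coefficient-extraction gives
$$\binom{j+l}{j} a_{j+l}(G) = \sum_{S \subseteq V(G)} a_j(G[S])\, a_l(G-S).$$
I would first verify $a_0(G) = 0$ for non-empty $G$ by induction on $|V(G)|$: setting $j = l = 0$ above and using $a_0(\emptyset) = 1$ gives $a_0(G) = 2 a_0(G) + \sum_{\emptyset \neq S \subsetneq V(G)} a_0(S) a_0(G-S)$, forcing $a_0(G) = 0$. Then I would prove $a_k(G) = \sum_{P \in \mathcal{P}_k} \prod_i b(S_i)$ by induction on $k$. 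The base $k = 1$ is tautological. For the step, apply the recursion with $j = k-1$, $l = 1$: $k\, a_k(G) = \sum_{S} a_{k-1}(G[S])\, b(G-S)$, where only $S$ with $G-S$ non-empty contribute. By induction, the right side counts each partition of $V(G)$ into $k$ non-empty parts exactly $k$ times, once for each choice of a ``distinguished'' part to play the role of $G-S$. Dividing by $k$ proves the formula. This simultaneously shows $a_k(G) = 0$ for $k > |V(G)|$, since no such partitions exist; hence $f = f_b$.

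The main obstacle is not conceptual but a bookkeeping annoyance: the definition $f_b(G, x) = \sum_{k=1}^{n} a_k(G) x^k$ produces $f_b(\emptyset, x) = 0$, which clashes with the exponential-type requirement $f(\emptyset, x) = 1$, so the empty graph must be handled as a special case and the convention must be carried consistently through the inductions on both sides. Apart from this wrinkle, the entire proof reduces to the standard combinatorial bijection relating ordered pairs of partitions of a set $S$ and its complement to partitions of the whole set---an instance of the classical exponential formula applied to graph-indexed data.
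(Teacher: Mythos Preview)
Your argument is correct. Part (a) is essentially the paper's proof, phrased coefficient-by-coefficient rather than as a single formal expansion; the underlying bijection between ordered $k$-partitions of $V(G)$ with a choice of $j$ parts and pairs of partitions of $S$ and $V(G)\setminus S$ is identical.

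Part (b), however, takes a genuinely different route. The paper (following Tardos) inducts on $|V(G)|$: using the induction hypothesis on all proper induced subgraphs and the already-proved part (a), one gets that $g(x)=f(G,x)-f_b(G,x)$ satisfies the Cauchy relation $g(x+y)=g(x)+g(y)$, hence $g(x)=cx$; matching the $x^1$ coefficient forces $c=0$. Your approach instead inducts on $k$ after first extracting the convolution identity $\binom{j+l}{j}a_{j+l}(G)=\sum_S a_j(G[S])a_l(G-S)$ and establishing $a_0(G)=0$ for non-empty $G$. Specializing to $l=1$ gives the recursion $k\,a_k(G)=\sum_S a_{k-1}(G[S])\,b(G-S)$, and the combinatorial count of ``partition plus distinguished block'' finishes the induction. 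The Tardos argument is shorter and avoids ever unpacking the partition formula; your argument is more explicit and yields, as a by-product, the recursion for $a_k$ in terms of $a_{k-1}$ and $b$, which can be independently useful. Your remark that the convention $f_b(\emptyset,x)=1$ must be imposed by hand is accurate and is handled only implicitly in the paper.
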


\begin{proof}
\noindent (a) This is a formal computation:
$$\sum_{S_1\uplus S_2=V(G)}f_b(S_1,x)f_b(S_2,y)=
\sum_{S_1\uplus S_2=V(G)}\left(\sum_{k=1}^na_k(S_1)x^k\right)
\left(\sum_{\ell=1}^na_{\ell}(S_2)y^{\ell}\right)=$$
$$=\sum_{S_1\uplus S_2=V(G)}\left(\sum_{k=1}^n\left(
\sum_{\{R_1,R_2,\dots ,R_k\}\in \mathcal{P}_k(S_1)}b(R_1)b(R_2)\dots
b(R_k)\right)x^k\right)\cdot$$  
$$\cdot \left(\sum_{\ell=1}^n\left(\sum_{\{T_1,T_2,\dots ,T_{\ell}\}\in
\mathcal{P}_{\ell}(S_2)}b(T_1)b(T_2)\dots b(T_{\ell})\right)y^{\ell}\right)=$$ 
$$=\sum_{r=1}^n\left(\sum_{\{Q_1,Q_2,\dots ,Q_r\}\in
\mathcal{P}_r(G)}b(Q_1)\dots b(Q_r)\right)\left( \sum_{i=1}^r{r \choose
  i}x^iy^{r-i}\right)=$$ 
$$=\sum_{r=1}^na_r(G)(x+y)^r=f_b(G,x+y).$$
Hence we have proved part (a). 
\bigskip

\noindent (b) The following proof is due to G\'abor Tardos \cite{tar}.
We prove the statement by induction on the number of vertices. The claim is
trivial  for the empty graph. Observe that by induction
$$f(G,x+y)-f(G,x)-f(G,y)=\sum_{S_1\uplus S_2=V(G)\atop  S_1,S_2\neq
  \emptyset}f(S_1,x)f(S_2,y)=$$ 
$$=\sum_{S_1\uplus S_2=V(G)\atop  S_1,S_2\neq
  \emptyset}f_b(S_1,x)f_b(S_2,y)=f_b(G,x+y)-f_b(G,x)-f_b(G,y).$$ 
In the last step we used the fact that $f_b$ is exponential-type by part (a).
Hence for the polynomial $g(x)=f(G,x)-f_b(G,x)$ we have
$$g(x+y)=g(x)+g(y).$$
Thus $g(x)$ is linear: $g(x)=cx$. On the other hand, $b(G)=a_1(G)$ is 
defined as the coefficient of $x^1$ in $f(G,x)$, whence $c=0$.
This completes the proof.
\end{proof}

\begin{Rem} In the ``nice cases'' we have $f(K_1,x)=x$ or in other words,
$b(K_1)=1$ implying that $f$ is  monic, but this is not necessarily true in
general.  
\end{Rem}

\begin{Th}\label{mult-conn} Let $f_b(G,x)$ be a graph polynomial of
exponential type.  Then $f_b$ is multiplicative if and only if $b$ vanishes on
non-connected graphs. 
\end{Th}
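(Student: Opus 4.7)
My plan is to handle the two directions of the equivalence separately, using the explicit partition formula $a_k(G)=\sum_{\{S_1,\dots,S_k\}\in\mathcal{P}_k}b(S_1)\cdots b(S_k)$ that defines $f_b$. The central observation, needed for both directions, is that if $G=G_1\uplus G_2$ then there are no edges of $G$ between $V(G_1)$ and $V(G_2)$, so for any $S\subseteq V(G)$ that straddles both sides, $G[S]$ is disconnected.

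For the ``if'' direction, assume $b$ vanishes on non-connected graphs and compute $a_k(G_1\uplus G_2)$. Any partition $\{S_1,\dots,S_k\}$ of $V(G_1)\uplus V(G_2)$ containing a block $S_i$ that meets both $V(G_1)$ and $V(G_2)$ has $b(G[S_i])=0$ by the straddling observation, so such partitions contribute nothing. The surviving partitions are exactly those obtained by choosing $i+j=k$, partitioning $V(G_1)$ into $i$ blocks and $V(G_2)$ into $j$ blocks, and taking the union. Since $G[S]=G_1[S]$ for $S\subseteq V(G_1)$ (and likewise for $G_2$), the product $b(S_1)\cdots b(S_k)$ factors as a product over the two sides. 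Summing gives the convolution identity $a_k(G_1\uplus G_2)=\sum_{i+j=k}a_i(G_1)a_j(G_2)$, which is exactly the coefficient-wise statement of $f_b(G_1\uplus G_2,x)=f_b(G_1,x)f_b(G_2,x)$.

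For the ``only if'' direction, assume $f_b$ is multiplicative, let $G$ be a non-connected graph, and write $G=G_1\uplus G_2$ with both parts non-empty. Compare the coefficient of $x^1$ on both sides of $f_b(G,x)=f_b(G_1,x)f_b(G_2,x)$. On the left, the unique partition of $V(G)$ into a single block yields $a_1(G)=b(G)$. On the right, each factor $f_b(G_i,x)$ has zero constant term (because there is no partition of a non-empty vertex set into zero blocks, so $a_0=0$), hence the product has no $x^1$-term at all. Therefore $b(G)=0$.

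No inductive step or serious obstacle is required: the sufficiency reduces to recognizing the Cauchy-product structure once the straddling blocks are killed, and the necessity is a one-line coefficient extraction. The only point that needs a moment of care is checking that $G[S]=G_i[S]$ when $S\subseteq V(G_i)$, which is immediate from the definition of disjoint union.
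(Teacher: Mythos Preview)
Your proof is correct and takes essentially the same approach as the paper: the ``only if'' direction extracts the coefficient of $x^1$ using that $f_b$ has vanishing constant term on non-empty graphs, and the ``if'' direction derives the convolution $a_k(G_1\uplus G_2)=\sum_{i+j=k}a_i(G_1)a_j(G_2)$ from the partition formula after discarding straddling blocks. The paper's write-up is terser but the content is identical.
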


\begin{proof} Since the constant term of an exponential type polynomial is
$0$ for every  graph with at least one vertex, the condition is necessary:
if $H=H_1\uplus H_2$ then $f_b(H)=f_b(H_1)f_b(H_2)$ implies that $b(H)=0$.

On the other hand, if $b(H)=0$ for all non-connected graphs then from 
Theorem~\ref{exp-bb} we see that
$$a_k(H_1\uplus H_2)=\sum_{j=1}^ka_j(H_1)a_{k-j}(H_2)$$
which means that $f(H_1\uplus H_2,x)=f(H_1,x)f(H_2,x)$.
\end{proof}

\begin{Rem}\label{ex} Surprisingly, the class of multiplicative  graph 
polynomials of bounded exponential type contains other natural graph
polynomials besides the chromatic polynomial. 
We give some examples of these polynomials with their function $b$ without
proof. 
\bigskip

\noindent $\bullet$ Let
$M(G,x)=x^n-m_1(G)x^{n-1}+m_2(G)x^{n-2}-m_3(G)x^{n-3}+\dots $ be the
(modified) matching polynomial \cite{god3,god4,hei}, where $m_k(G)$ is the
number of matchings of size $k$. Then $M(G,x)$ is of exponential type, where
the function $b_M$ satisfies $b_M(K_1)=1$, $b_M(K_2)=-1$ and $b_M(H)=0$
otherwise. 
\medskip

\noindent $\bullet$ Let $h(G,x)=\sum_{k=1}^n(-1)^{n-k}a_k(G)x^k$ be the
adjoint polynomial, where $a_k(G)$ is the number of ways one can cover the
vertex set of $G$ by $k$ vertex disjoint complete graphs. Then $h(G,x)$ is of
exponential type, where the function $b_h$ satisfies $b_h(K_n)=(-1)^{n-1}$
for the complete graphs $K_n$ and $b_h(H)=0$ otherwise. 
\medskip

\noindent $\bullet$ Let $L(G,x)$ be the characteristic polynomial of the
Laplacian matrix, for the sake of brevity we will call it the Laplacian
polynomial. The Laplacian matrix $L(G)$ is defined as follows:
$L(G)_{ii}=d(i)$ is the degree of the vertex $i$ and if $i\neq j$ then
$L(G)_{ij}$ is $(-1)$ times the number of edges connecting the vertices $i$
and $j$. Then $L(G,x)$ is of exponential type, where the function $b_L$ 
satisfies $b_L(G)=(-1)^{|V(G)|-1}|V(G)|\tau(G)$, where $\tau(G)$ is the number 
of spanning trees of the graph $G$. 
\medskip

\noindent $\bullet$ The polynomial $Z_G(q,v_0)$ is of exponential type for 
every fixed $v_0$. In fact, one can easily  prove the following identity for
the multivariate version  of the Tutte polynomial \cite{sco-sok2}:
$$\sum_{S\subseteq V(G)}Z_{G[S]}(q_1,\underline{v})Z_{G[V\setminus
    S]}(q_2,\underline{v})=Z_G(q_1+q_2,\underline{v}).$$  
\end{Rem}

\begin{Rem}\label{weaklin} To prove Theorem~\ref{main}, we only need to be 
able to apply  Theorem~\ref{f-conv-entr}. For that, we  need to prove that 
for $f$ isomorphism-invariant and of exponential type, 
$e_k\in\C\mathcal G$ holds. However, we include the following more precise 
statements for the sake of completeness. Recall that if $\mathcal{H}$ is a
class of graphs then $\C \mathcal{H}$ is the vector space generated by the 
functions $H(\cdot )$ $(H\in \mathcal{H})$. The following theorem asserts that
for exponential-type graph polynomials, the coefficients $e_k$ and power sums
$p_k$ are in $\C \mathcal{H}_k$, where $\mathcal{H}_k$ is a
very special class of graphs.  
\end{Rem}

\begin{Th}\label{lin} Let $$f(G,x)=\sum_{k=0}^n(-1)^{k}e_k(G)x^{n-k}$$ be an
isomorphism-invariant  monic graph polynomial of exponential type, where
$n=|V(G)|$. Let  $k\ge 1$. Define $p_k(G)$ to be the $k$-th power sum of the
roots of $ f(G,x)$.

\begin{itemize}
\item[(a)]  We have 
\begin{equation}\label{e}
e_k\in\C \{H : k+1\le |V(H)|\le 2k, |V(H)|-k(H)\le k\}
\end{equation} 
and
\begin{equation}\label{p}
p_k\in\C \{H : 2\le |V(H)|\le 2k, |V(H)|-k(H)\le k\}.
\end{equation}
\item[(b)] If, in addition, $f$  is multiplicative, then
\begin{equation}\label{e-mult}
e_k\in\C \{H : H \textit{ has no isolated points and } |V(H)|-k(H)= k\}
\end{equation} 
and
\begin{equation}\label{p-mult}p_k\in \C \{H :  H \textit{ is connected and 
} 2\le |V(H)|\le k+1\}.
\end{equation}
\end{itemize}
\end{Th}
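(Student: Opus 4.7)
The plan is to work with the representation $f=f_b$ provided by Theorem~\ref{exp-bb}, noting that monic-ness forces $b(K_1)=1$. For part~(a), the key idea is a M\"obius inversion in the lattice of spanning subgraphs: define
\[
\hat b(H) = \sum_{H'\subseteq H,\ V(H')=V(H)} (-1)^{e(H)-e(H')} b(H'),
\]
so that $b(G[S])=\sum_{H'\subseteq G[S],\ V(H')=S}\hat b(H')$. Substituting into $a_{n-k}(G)=\sum_{\pi}\prod_{S\in\pi} b(G[S])$, where $\pi$ runs over partitions of $V(G)$ into $n-k$ parts, and swapping the order of summation yields
\[
a_{n-k}(G) = \sum_{H \subseteq G \text{ spanning}}\ \sum_{\pi\text{ compatible with }H} \prod_{S\in\pi}\hat b(H[S]),
\]
where ``compatible'' means each part of $\pi$ is a union of connected components of $H$. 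Compatibility forces $k(H)\ge n-k$, i.e.\ the rank of $H$ is at most $k$.

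Next I group spanning subgraphs by their \emph{core} $\widetilde H$ (the induced subgraph on the non-isolated vertices of $H$). For each isomorphism type $\widetilde H$ with no isolated vertex and rank $r\le k$, the number of spanning subgraphs of $G$ with core $\widetilde H$ is exactly $\widetilde H(G)$, and the inner sum depends only on $\widetilde H$ and $n=|V(G)|$; call it $W(\widetilde H, n)$. A direct book-keeping --- parametrising each contributing partition by the number $a$ of non-trivial components and the number $i$ of isolated vertices lying in its non-singleton blocks --- yields
\[
W(\widetilde H,n) = \sum_{i} w_i \binom{n-|V(\widetilde H)|}{i},
\]
with $w_i$ independent of $n$ and nonzero only in the range
$\max(0,\,k+1-|V(\widetilde H)|)\le i\le 2(k-r)$. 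Combining this with the elementary identity
\[
\widetilde H(G)\binom{n-|V(\widetilde H)|}{i}=(\widetilde H+\overline{K_i})(G)
\]
and the inequality $|V(\widetilde H)|\ge 2k(\widetilde H)$ (from the absence of isolated vertices), one deduces that every subgraph count $H(G)$ appearing in $a_{n-k}(G)$ satisfies $k+1\le |V(H)|\le 2k$ and $|V(H)|-k(H)=r\le k$, which is precisely~(\ref{e}). The power-sum statement~(\ref{p}) then follows from Newton's identities, Corollary~\ref{ringcoroll} ($\C\mathcal G$ is a ring) and the submodularity $\mathrm{rank}(H_1\cup H_2)\le \mathrm{rank}(H_1)+\mathrm{rank}(H_2)$ of the graphic matroid, which controls the rank of a product $H_1(G)H_2(G)$.

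For part~(b), Theorem~\ref{mult-conn} together with multiplicativity gives $b(H)=0$ for disconnected $H$. Since every spanning subgraph of a disconnected graph is itself disconnected, this yields $\hat b(H)=0$ for disconnected $H$ as well. Consequently, in the spanning-subgraph expansion only partitions $\pi$ whose blocks are individual connected components of $H$ can contribute. This pins $\pi$ down uniquely given $H$ and forces $k(H)=n-k$, i.e.\ rank of $H$ \emph{exactly} $k$. The resulting clean formula
\[
a_{n-k}(G)=\sum_{\widetilde H}\ \Bigl(\prod_{D\text{ component of }\widetilde H} \hat b(D)\Bigr)\cdot\widetilde H(G),
\]
where $\widetilde H$ ranges over isomorphism types with no isolated vertex and rank exactly $k$, immediately proves~(\ref{e-mult}). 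Finally~(\ref{p-mult}) follows by combining~(\ref{p}) with Lemma~\ref{mult}: $p_k$ is additive, hence in $\C\mathcal C$, and the constraint $k(H)=1$ collapses the rank bound to $|V(H)|\le k+1$.

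The main obstacle I anticipate is the book-keeping behind the range of $i$ in $W(\widetilde H,n)$, especially the lower bound $i\ge k+1-|V(\widetilde H)|$: this is not a formal consequence of the rank inequality alone but requires the observation that whenever $r<k$ at least one non-singleton merging block must exist in any contributing $\pi$, and then a careful count of how many non-trivial components and how many isolated vertices those merging blocks can jointly absorb.
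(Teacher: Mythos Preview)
Your argument is correct and takes a genuinely different route from the paper's.

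\textbf{Where you diverge.} For the rank bound in~\eqref{e}, the paper first groups partitions by the induced subgraph $G[R]$ on the union $R$ of the non-singleton blocks, obtaining an expansion in induced subgraph counts $H^*(G)$, and then invokes Fact~\ref{sub-ind} to pass to $H(G)$. This establishes only the vertex bounds $k+1\le |V(H)|\le 2k$; the inequality $|V(H)|-k(H)\le k$ is proved \emph{separately} by extracting a functional equation for the coefficients $c_k(H)$ from the exponential-type identity and running an induction on $k$. You instead M\"obius-invert $b$ over spanning subgraphs to $\hat b$ and rewrite $a_{n-k}(G)$ as a sum over pairs (spanning subgraph $H$, compatible partition $\pi$). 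Compatibility immediately forces $k(H)\ge n-k$, so the rank bound drops out for free, and grouping by the core $\widetilde H$ and expanding $W(\widetilde H,n)$ in binomials $\binom{n-|V(\widetilde H)|}{i}$ gives the vertex bounds directly in terms of $H(G)$ without passing through $H^*(G)$.

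\textbf{What each approach buys.} Your method is more explicit and avoids both the induced-subgraph detour and the inductive functional-equation argument; it also yields the clean closed form for $e_k$ in the multiplicative case (your final display for $a_{n-k}$) essentially for free once one observes $\hat b$ inherits vanishing on disconnected graphs. The paper's approach, on the other hand, exhibits the recursive structure of the coefficients $c_k(H)$ under $H=H'\uplus H''$ (which is what underlies Remark~\ref{coeff-formula}); this structural information is not visible in your expansion.

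For~\eqref{p},~\eqref{e-mult} and~\eqref{p-mult} the two proofs coincide in spirit: both use Newton's identities plus Fact~\ref{ring} (your remark on rank subadditivity is exactly what the paper uses implicitly), Theorem~\ref{mult-conn}, and Lemma~\ref{mult}.
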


\begin{proof}
We will apply Theorem~\ref{exp-bb}. 
Note that
\[e_k(G)=(-1)^{k}a_{n-k}(G).\]
By  Theorem~\ref{exp-bb} we have
$$a_{n-k}(G)=\sum_{\{S_1,S_2,\dots ,S_{n-k}\}\in
  \mathcal{P}_{n-k}}b(S_1)\cdots b(S_{n-k}),$$ 
where the summation runs over the set $\mathcal{P}_{n-k}$ of all  partitions
of $V(G)$ into exactly $n-k$ non-empty sets. 
Let us consider the partition $\pi=\{S_1,\dots ,S_{n-k}\}$. Let it  have
$n-l$ singletons and $l-k$ bigger parts. Then the union $R$ of these bigger
parts has $l$ points and $k+1\le l\le 2k$. We group the terms of the last sum
according to the isomorphism class of $G[R]$:

\begin{equation}\label{induced}
a_{n-k}(G)=\sum_{k+1\le |V( H)|\le 2k}H^*(G)\sum_{\bar\pi=\{S_1,S_2,\dots
  ,S_{|H|-k}\}}b(S_1)\cdots b(S_{|H|-k}), 
\end{equation}
where  $\bar\pi$ runs over the partitions of $V(H)$ into  $|H|-k$ sets of at
least $2$ elements. By Fact~\ref{sub-ind} this can be rewritten as

\begin{equation}\label{coeff}
a_{n-k}(G)=\sum_{k+1\le |V( H)|\le 2k}c_k(H)\cdot H(G),
\end{equation}
where $c_k(H)\in\C$. This already proves that $e_k\in\C\mathcal G$.
Note that  formula~\eqref{coeff} holds for $k=0$ as well, except that we no
longer have $k+1\le |V(H)|$. More precisely, $c_0(\emptyset)=1$ and $c_0(H)=0$
for all non-empty graphs $H$. 

We need to prove that $c_k(H)=0$ unless $|V(H)|-k(H)\le k$. 
The defining identity~\eqref{exp-type} of exponential type and 
formula~\eqref{coeff} yields
$$\sum_k\sum_H c_k(H)\cdot H(G) (x+y)^{n-k}=$$ 
$$=\sum_{S\subseteq V(G)}\sum_i\sum
_{H'} c_i(H')\cdot H'(S)x^{|S|-i}\sum_j\sum _{H''} c_j(H'')\cdot
H''(G-S)y^{n-|S|-j}=$$
$$=\sum_{H'}\sum _{H''} \sum_i\sum_jc_i(H')c_j(H'')\cdot (H'\uplus H'')(G)
(x+y)^{n-|H'|-|H''|}x^{|H'|-i}y^{|H''|-j}. 
$$
We consider the part that is homogeneous of degree $n-k$ in $x$ and $y$:
$$\sum_H c_k(H)\cdot H(G) (x+y)^{n-k}=$$ 
$$=\sum_{H'}\sum _{H''} \sum_{i+j=k}c_i(H')c_j(H'')\cdot (H'\uplus H'')(G)
(x+y)^{n-|H'|-|H''|} x^{|H'|-i}y^{|H''|-j}.$$
This is true for all $G$, therefore
$$c_k(H) (x+y)^{n-k}=\sum_{H'\uplus H''=H} \sum_{i+j=k}c_i(H')c_j(H'') 
(x+y)^{n-|H|}x^{|H'|-i}y^{|H''|-j}.$$
We divide by $(x+y)^{n-|H|}$ to get
$$c_k(H)(x+y)^{|H|-k}=\sum_{H'\uplus H''=H}\sum_{i+j=k}c_i(H')c_j(H'') 
x^{|H'|-i}y^{|H''|-j}=$$
$$=c_k(H)(x^{|H|-k}+y^{|H|-k})+\sum_{H'\uplus H''=H}
\sum_{i=1}^{k-1}c_i(H')c_{k-i}(H'') x^{|H'|-i}y^{|H''|-k+i}.$$
If  $|V(H)|= k+1,$ then  $|V(H)|-k(H)\le k$ trivially holds and there is 
nothing to prove. If  $|V(H)|\ge k+2$ and $c_k(H)\ne 0$, then 
$$0\ne  c_k(H) \left((x+y)^{|H|-k}-x^{|H|-k}-y^{|H|-k}\right)=$$
$$=\sum_{H'\uplus H''=H}  \sum_{i=1}^{k-1}c_i(H')c_{k-i}(H'')
x^{|H'|-i}y^{|H''|-k+i}.$$
Thus there exists $1\le i\le k-1$ and a decomposition $H'\uplus H''=H$ with 
$c_i(H')c_{k-i}(H'')\ne 0$. By induction on $k$, we may assume that 
$|V(H')|-k(H')\le i$ and $|V(H'')|-k(H'')\le k-i$, whence $|V(H)|-k(H)\le k$.
This proves the  statement~\eqref{e} of  part (a).

For the  statement~\eqref{p}, recall from elementary algebra that $p_k$ is an 
integral linear combination of terms $\prod e_{i_j}$, where $i_j\ge 1$ and
$\sum i_j=k$. The statement~\eqref{p} thus follows from statement~\eqref{e} 
and Fact~\ref{ring}.

In part (b), we assume that $f$ is multiplicative. By Theorem~\ref{mult-conn}, 
$b$ vanishes on non-connected graphs. Thus, from equation~\eqref{induced} and
Fact~\ref{sub-ind} we see that 
$$e_k\in\C \{H : H \textit{ has no isolated points and }  |V(H)|-k(H)\ge
k\}.$$  
Using \eqref{e}, the statement \eqref{e-mult} follows. The 
statement~\eqref{p-mult} is immediate from Lemma~\ref{mult} and the
statement~\eqref{p}. 
\end{proof}

\subsection{Proof of  Theorem~\ref{f-Sokal} (the Sokal bound)}\label{gen-Sokal}

To prove Theorem~\ref{f-Sokal}, we will need the concept of the multivariate
independence polynomial.

\begin{Def} Let $w: V(G)\rightarrow \mathbb{C}$ and let
$$I(G,\underline{w})=\sum_{S\in \mathcal{I}} \prod_{u\in S}w_u $$
be the \textit{multivariate independence polynomial}.
\end{Def}

Our strategy will be the following. We express any exponential-type graph
polynomial as an independence polynomial of a bigger graph. Then we use 
Dobrushin's lemma and its corollaries to deduce Theorem~\ref{f-Sokal}.

\begin{Th}[Dobrushin's Lemma] Assume that the functions $r:V(G)\rightarrow (0,1)$ and
  $w: V(G)\rightarrow \mathbb{C}$ satisfy the inequalities 
$$|w_v|\leq (1-r(v))\prod_{(v,v')\in E(G)}r(v')$$
for every $v\in V(G)$. Then
\begin{itemize}
\item[(a)]
$I(A,\underline{w})\neq 0$ for every $A\subseteq V(G)$.
\item[(b)]
$$\left|\frac{I(B,\underline{w})}{I(A,\underline{w})}\right|\leq 
\left(\prod_{u\in A\setminus B}r(u)\right)^{-1}$$ 
for every $B\subseteq A\subseteq V(G)$.
\end{itemize}
\end{Th}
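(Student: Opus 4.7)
The plan is to establish (a) and (b) simultaneously by induction on $|A|$, with the trivial base case $A = \emptyset$ for which $I(\emptyset, \underline{w}) = 1$. The crucial tool is the deletion recurrence
\[
I(A, \underline{w}) = I(A \setminus \{v\}, \underline{w}) + w_v \cdot I(A \setminus N[v], \underline{w}),
\]
valid for any $v \in A$, which reflects whether a given independent set contains $v$ or avoids its closed neighborhood. Here $N[v]$ and $N(v)$ refer to the neighborhoods in $G$.

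Assume both statements hold for every vertex subset of size less than $n$, and consider $A$ with $|A| = n$. I would first prove (a) for $A$. Pick any $v \in A$. By the inductive hypothesis, $I(A \setminus \{v\}, \underline{w}) \neq 0$, and part (b) applied with ambient set $A \setminus \{v\}$ and target set $A \setminus N[v]$ gives
\[
\left|\frac{I(A \setminus N[v], \underline{w})}{I(A \setminus \{v\}, \underline{w})}\right| \leq \prod_{v' \in N(v) \cap A} r(v')^{-1}.
\]
Combined with the hypothesis on $w_v$, and using $r(v') \in (0,1)$ to discard the factors $r(v')$ corresponding to neighbors of $v$ outside $A$, this yields
\[
\left|w_v \cdot \frac{I(A \setminus N[v], \underline{w})}{I(A \setminus \{v\}, \underline{w})}\right| \leq 1 - r(v).
\]
Dividing the recurrence by $I(A \setminus \{v\}, \underline{w})$, the ratio $I(A, \underline{w})/I(A \setminus \{v\}, \underline{w})$ differs from $1$ by at most $1 - r(v)$; hence its absolute value is at least $r(v) > 0$, which simultaneously establishes $I(A, \underline{w}) \neq 0$ and
\[
\left|\frac{I(A \setminus \{v\}, \underline{w})}{I(A, \underline{w})}\right| \leq \frac{1}{r(v)}.
\]

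To upgrade this one-step bound to the full assertion (b), enumerate $A \setminus B = \{v_1, \ldots, v_k\}$ and set $A_0 = A$, $A_i = A \setminus \{v_1, \ldots, v_i\}$, so $A_k = B$. Telescoping gives
\[
\frac{I(B, \underline{w})}{I(A, \underline{w})} = \prod_{i=1}^k \frac{I(A_i, \underline{w})}{I(A_{i-1}, \underline{w})},
\]
and for each factor the argument of the previous paragraph, applied with $(A, v) = (A_{i-1}, v_i)$, produces the bound $1/r(v_i)$. For $i \geq 2$ the ratio bound used in that derivation is furnished by the induction hypothesis on $|A_{i-1}| < n$, while for $i = 1$ we invoke the bound just established for $A$ itself. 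Multiplying the $k$ factors gives exactly $\prod_{u \in A \setminus B} r(u)^{-1}$, as required. The main obstacle is organizational rather than conceptual: one has to be careful to prove (a) for $A$ \emph{before} invoking (b) for $A$ in the telescoping, and the key algebraic point is that the exponential-type decay encoded by the product $\prod r(v')$ in the hypothesis lines up exactly with the combinatorial blow-up $\prod r(v')^{-1}$ coming from (b), leaving only the slack factor $1 - r(v) < 1$ that drives the induction.
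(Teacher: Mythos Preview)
Your proof is correct and follows essentially the same approach as the paper: a simultaneous induction on $|A|$, using the deletion recurrence to get the key one-step ratio bound $|I(A\setminus\{v\},\underline w)/I(A,\underline w)|\le 1/r(v)$, and then deducing (b) from this. The only cosmetic difference is that you unroll (b) as a telescoping product over an enumeration of $A\setminus B$, whereas the paper peels off a single vertex $v\in A\setminus B$ and appeals directly to the inductive hypothesis for $I(B)/I(A-v)$; these are the same argument.
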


\begin{proof} (The following proof is a very slight modification of Borgs'
  proof \cite{bor}.) 
We prove the two statements together by induction on the size of $|A|$. Both
statements are trivial if $|A|=0$. Now assume that we have already proved 
both statements for sets of size smaller than $|A|$. Now let us prove them
for the set $A$. Let $v\in A$.  Recall that $N[v]=N(v)\cup \{v\}$.
$$I(A,\underline{w})=I(A-v,\underline{w})+w_vI(A-N[v],\underline{w})=$$
$$=I(A-v,\underline{w})\left(1+w_v\frac{I(A-N[v],\underline{w})}{I(A-v,
\underline{w})}\right).$$   
By induction $I(A-v,\underline{w})\neq 0$ so we can indeed divide by it. By
part (b) of the induction we have
$$\left|w_v\frac{I(A-N[v],\underline{w})}{I(A-v,\underline{w})}\right|\leq
|w_v| \left(\prod_{v'\in (A-v)\setminus (A-N[v])}r(v')\right)^{-1}\leq $$
Now let us use the condition on $w_u$'s. (Clearly, $(A-v)\setminus
(A-N[v])=\{v'\ |\ (v,v')\in E(A)\}$.)  
$$\leq (1-r(v))\prod_{(v,v')\in E(G)}r(v') \left(\prod_{v'\in (A-v)\setminus
  (A-N[v])}r(v')\right)^{-1}=1-r(v).$$ 
Hence
$$|I(A,\underline{w})|=|I(A-v,\underline{w})|\left|1+w_v\frac{I(A-N[v],
\underline{w})}{I(A-v,\underline{w})}\right|\geq $$  
$$\geq
|I(A-v,\underline{w})|\left(1-\left|w_u\frac{I(A-N[v],\underline{w})}{I(A-v,
  \underline{w})}\right|\right)\geq $$    
$$\geq |I(A-v,\underline{w})|r(v)>0$$
Hence $I(A,\underline{w})\neq 0$. Finally, if $B\subset A$ then for any $v\in
A\setminus B$ we have $B\subseteq A-v$ so we obtain
$$\left|\frac{I(B,\underline{w})}{I(A,\underline{w})}\right|=\left|\frac{I
(B,\underline{w})}{I(A-v,\underline{w})}\right|\left|\frac{I(A-v,\underline{
w})}{I(A,\underline{w})}\right|\leq $$    
$$\leq  \left(\prod_{u\in (A-v)\setminus B}r(u)\right)^{-1}r(v)^{-1}=
\left(\prod_{u\in A\setminus B}r(u)\right)^{-1}.$$
Hence we have proved part (b) as well.
\end{proof}

\begin{Rem} The condition of  Dobrushin's lemma resembles  that of the
Lov\'asz local lemma. This is not a coincidence. Scott and Sokal
\cite{sco-sok1} gave an exact form of the Lov\'asz local lemma in terms of
the independence polynomial. Although their theorem is precise, it is hard to
use it in its original form. Dobrushin's lemma provides a way to obtain a
useful relaxation of their theorem. In this way we recover the original
Lov\'asz local lemma.  
\end{Rem} 

In our application, it will be more convenient to check the condition of the
following version of  Dobrushin's lemma.

\begin{Cor} \label{Dob-cor} If the function $a:V(G)\rightarrow \mathbb{R}^+$
satisfies the inequality 
$$\sum_{u\in N[v]}\log \left(1+|w_u|e^{a(u)}\right)\leq a(v)$$
for each vertex $v\in V(G)$, then 
$$I(G,\underline{w})\neq 0.$$
In particular, if the function $a:V(G)\rightarrow \mathbb{R}^+$ satisfies the
inequality 
$$\sum_{u\in N[v]}|w_u|e^{a(u)}\leq a(v)$$
for each vertex $v\in V(G)$, then 
$$I(G,\underline{w})\neq 0.$$
\end{Cor}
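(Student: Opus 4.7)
The plan is to deduce the corollary directly from Dobrushin's lemma by making the right choice of the auxiliary function $r:V(G)\to(0,1)$. Comparing the Dobrushin hypothesis, which involves $(1-r(v))\prod_{v'\in N(v)}r(v')$, with the term $1+|w_u|e^{a(u)}$ appearing in our sum, I would try the substitution
\[
r(v)=\frac{1}{1+|w_v|e^{a(v)}}.
\]
Since $a(v)>0$, we have $r(v)\in(0,1]$, with strict inequality $r(v)<1$ whenever $|w_v|>0$. Vertices with $w_v=0$ contribute a factor $1$ to every term of $I(G,\underline{w})$ and can be deleted without changing the polynomial; the hypothesis is preserved under such deletion, since removing a vertex only drops non-negative summands from the left-hand side. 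So I may assume $w_v\neq 0$ at every vertex, and then $r(v)\in(0,1)$ strictly.

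With this choice of $r$, a direct computation shows that the required Dobrushin inequality
\[
|w_v|\leq (1-r(v))\prod_{v'\in N(v)}r(v')
\]
is equivalent, after dividing by $|w_v|>0$ and using $1-r(v)=|w_v|e^{a(v)}r(v)$, to the inequality
\[
\prod_{u\in N[v]}\bigl(1+|w_u|e^{a(u)}\bigr)\leq e^{a(v)}.
\]
Taking logarithms recovers exactly the logarithmic hypothesis of the corollary. Applying Dobrushin's lemma (part (a)) to $A=V(G)$ then yields $I(G,\underline{w})\neq 0$, proving the first statement.

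For the second statement, the elementary inequality $\log(1+x)\leq x$ valid for all $x\geq 0$ gives
\[
\sum_{u\in N[v]}\log\bigl(1+|w_u|e^{a(u)}\bigr)\leq \sum_{u\in N[v]}|w_u|e^{a(u)}\leq a(v),
\]
so the linear hypothesis implies the logarithmic one and the conclusion follows from the first part. The entire argument is essentially a change of variables; the only genuine choice is guessing the formula $r(v)=1/(1+|w_v|e^{a(v)})$, and the only minor technical nuisance is handling vertices with $w_v=0$, which is dispatched by restricting to the induced subgraph on $\{v:w_v\neq 0\}$.
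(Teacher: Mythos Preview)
Your proof is correct and follows essentially the same route as the paper: both choose $r(v)=\bigl(1+|w_v|e^{a(v)}\bigr)^{-1}$, verify that Dobrushin's hypothesis becomes exactly the logarithmic inequality, and then invoke $\log(1+s)\le s$ for the second part. Your explicit handling of vertices with $w_v=0$ (by passing to the induced subgraph on $\{v:w_v\neq 0\}$) is a small rigour point the paper glosses over, since with $w_v=0$ one gets $r(v)=1\notin(0,1)$.
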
 

\begin{proof} Set $r(v)=\left(1+|w_v|e^{a(v)}\right)^{-1}$.
Then the condition
$$|w_v|\leq (1-r(v))\prod_{(v,v')\in E(G)}r(v')$$
can be rewritten using
$$(1-r(v))\prod_{(v,v')\in
  E(G)}r(v')=\left(1-\frac{1}{1+|w_v|e^{a(v)}}\right)\prod_{u\in
  N(v)}\frac{1}{1+|w_u|e^{a(u)}}=$$  
$$=|w_v|e^{a(v)}\prod_{u\in N[v]}\frac{1}{1+|w_u|e^{a(u)}}.$$
Hence the condition of  Dobrushin's lemma is equivalent to
$$\sum_{u\in N[v]}\log \left(1+|w_u|e^{a(u)}\right)\leq a(v).$$
The second statement simply follows from the inequality $\log (1+s)\leq s$ 
for $s\geq 0$. 
\end{proof}

We can express a (monic) graph polynomial of exponential type as a
multivariate independence polynomial as follows.   
\bigskip

Let us define the graph $\tilde{G}=(\tilde{V},\tilde{E})$ as
follows. The vertex set $\tilde{V}$ consists of the subsets of $V(G)$ of
size at least $2$. If $S_1,S_2\subseteq V(G)$ such that $|S_1|,|S_2|\geq 2$
then let $(S_1,S_2)\in \tilde{E}$ if and only if $S_1\cap S_2\neq
\emptyset$. Hence the independent sets of $\tilde{G}$ are those subsets
$S_1,\dots ,S_k\subset V(G)$ for which $S_i\cap S_j=\emptyset$ for 
$1\leq i<j\leq k$ and $|S_i|\geq 2$. In what follows we will call
$\{S_1,S_2,\dots ,S_k\}$ a subpartition. The set of subpartitions will be
denoted by $\mathcal{P}'(G)$. 

Let us define the function (or weights) of the vertices of $\tilde{G}$ as
follows. Set
$$w(S)=\frac{b(S)}{q^{|S|-1}}.$$
Note that if we have a subpartition $\{S_1,S_2,\dots ,S_k\}$ then by dividing \\
$V(G)\setminus \bigcup_{i=1}^kS_i$ into $1$-element sets we get a partition 
of $V(G)$ into 
$$k+n-\sum_{i=1}^k|S_i|=n-\sum_{i=1}^k(|S_i|-1)$$ 
sets. Then 
$$I(\tilde{G},\underline{w})=\sum_{\{S_1,\dots ,S_k\}\in
  \mathcal{P}'(G)}\prod_{i=1}^k\frac{b(S_i)}{q^{|S_i|-1}}=q^{-n}f_b(G,q).$$  
Hence if we find a zero-free region for $I(\tilde{G},\underline{w})$ then we
find a zero-free region for $f_b(G,x)$. 

Now we will specialize Corollary~\ref{Dob-cor} to our case.

\begin{Lemma} \label{Dob-set} Assume that for some positive real number $a$ 
we have 
$$\sum_{S\in \tilde{V} \atop v\in
  S}|b(S)|\left(\frac{e^a}{|q|}\right)^{|S|-1}\leq ae^{-a}$$ 
for every $v\in V(G)$. 
Then $f_b(G,q)\neq 0$.
\end{Lemma}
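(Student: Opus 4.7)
The plan is to apply Corollary~\ref{Dob-cor} to the auxiliary graph $\tilde G$ with weights $w(S)=b(S)/q^{|S|-1}$ and conclude that the multivariate independence polynomial $I(\tilde G,\underline w)$ is nonzero. Since we have already observed $I(\tilde G,\underline w)=q^{-n}f_b(G,q)$, and the hypothesis implicitly requires $q\neq 0$, this will prove $f_b(G,q)\neq 0$.

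The heart of the matter is choosing the right weight function $\tilde a\colon \tilde V\to\mathbb R^+$. The hypothesis suggests matching the exponent $|S|-1$, but taking $\tilde a(S)=(|S|-1)a$ just fails the Dobrushin-type inequality at $|S|=2$: the union bound produces an unavoidable factor of $|S|$, which $(|S|-1)$ cannot absorb. I would instead take $\tilde a(S)=|S|\,a$. Unpacking then gives
\[|w_T|e^{\tilde a(T)}=\frac{|b(T)|}{|q|^{|T|-1}}\,e^{|T|a}=e^a\,|b(T)|\left(\frac{e^a}{|q|}\right)^{|T|-1}.\]

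Next, observe that by the very definition of $\tilde G$ one has $\tilde N[S]=\{T\in\tilde V:T\cap S\neq\emptyset\}$. Applying a union bound over the vertices of $S$ and invoking the hypothesis once for each $v\in S$, I would obtain
\[\sum_{T\in \tilde N[S]}|w_T|e^{\tilde a(T)}\le e^a\sum_{v\in S}\sum_{\substack{T\ni v\\ |T|\ge 2}}|b(T)|\left(\frac{e^a}{|q|}\right)^{|T|-1}\le e^a\cdot |S|\cdot ae^{-a}=|S|\,a=\tilde a(S).\]
This is exactly the hypothesis of Corollary~\ref{Dob-cor}, which then delivers $I(\tilde G,\underline w)\neq 0$, and hence $f_b(G,q)\neq 0$.

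There is no serious obstacle in this proof: the whole argument is bookkeeping once the weighting $\tilde a(S)=|S|\,a$ has been chosen. The only minor subtlety is guessing this weighting, since the `natural' choice $(|S|-1)a$ that mimics the exponent in the hypothesis fails to absorb the factor $|S|$ coming from the union bound; the extra additive $a$ in the exponent produces precisely the factor $e^a$ that cancels the $e^{-a}$ on the right-hand side of the hypothesis.
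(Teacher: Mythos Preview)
Your proof is correct and follows essentially the same approach as the paper: apply Corollary~\ref{Dob-cor} to $\tilde G$ with the weight function $\tilde a(S)=a|S|$, then bound the closed-neighborhood sum by a union bound over the vertices of $S$ and invoke the hypothesis once for each $v\in S$. Your added remark explaining why the choice $(|S|-1)a$ fails is a nice piece of exposition not present in the paper, but the argument itself is the same.
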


\begin{proof} Let us check the second condition of Corollary~\ref{Dob-cor} 
for the weight function $w(S)=\frac{b(S)}{q^{|S|-1}}$ and the function
$a(S)=a|S|$. We need that
$$\sum_{S: (T,S)\in \tilde{E}}|w(S)|e^{a|S|}\leq a|T|$$
for every $T\in \tilde{V}$. (Recall that $(T,S)\in \tilde{E}$ means that
$S\cap T\neq \emptyset$.)
This is indeed true since 
$$\sum_{S: (T,S)\in \tilde{E}}|w(S)|e^{a|S|}\leq \sum_{v\in T}\sum_{v\in
  S}|w(S)|e^{a|S|}=$$  
$$=\sum_{v\in T}\sum_{v\in S}\frac{|b(S)|}{q^{|S|-1}}e^{a|S|}=\sum_{v\in
 T}e^a \sum_{v\in S}|b(S)|\left(\frac{e^a}{|q|}\right)^{|S|-1}\leq $$
By the assumption of the lemma, the inner sum can be bounded above by
$ae^{-a}$. The whole expression is therefore
$$\leq \sum_{v\in T}e^a(ae^{-a})=a|T|,$$
as claimed.
\end{proof}

Now we are ready to prove Theorem~\ref{f-Sokal}. 

\begin{proof} Since $f$ is of bounded exponential type, we have 
$$\sum_{S: v\in S \atop |S|=t}|b(S)|\leq R(\Delta)^{t-1},$$
whence 
$$\sum_{S\in \tilde{V} \atop v\in S}|b(S)|\left(\frac{e^a}{|q|}\right)^{|S|-1}=
\sum_{t=2}^{\infty}\left(\sum_{S: v\in S \atop |S|=t}|b(S)|\right) 
\left(\frac{e^a}{|q|}\right)^{t-1}\leq$$
$$\leq \sum_{t=2}^{\infty}R(\Delta)^{t-1}\left(\frac{e^a}{|q|}\right)^{t-1}
=\frac{1}{1-\frac{R(\Delta)e^a}{|q|}}-1$$
So the polynomial $f(G,x)$ does not vanish at  $q$ if
$$\frac{1}{1-\frac{R(\Delta)e^a}{|q|}}\leq 1+ae^{-a}.$$
This is satisfied if 
$$|q|\geq (e^a+e^{2a}/a)R(\Delta).$$
Hence the roots of the polynomial $f(G,z)$ have absolute value
$<c\cdot R(\Delta),$ where
$$c=\min_{a\geq 0}(e^a+e^{2a}/a)\approx 7.0319,$$
and the minimalizing $a\approx 0.4381$.
\end{proof}

\begin{Rem} The condition of Definition~\ref{cond-exp} seems to be
frightening, but in several cases it is easy to check. For the matching
polynomial or the adjoint polynomial  $R(\Delta)=\Delta$  trivially
satisfies the inequality. We get a slightly better bound if we apply
Lemma~\ref{Dob-set} directly. On the other hand, it can be proved that the
sharp upper bound for the roots is $4(\Delta-1)$ in both cases. For the
matching polynomial this is proved in \cite{hei}, for the
adjoint polynomial this is proved in \cite{csi_adj}.
\end{Rem}

\section{2-multiplicativity and 2-connected graphs}\label{2-multiplicative}

In a strict sense this part is not connected to the main objective of this
paper, but  it makes the picture more complete for the most interesting graph
polynomials. 

We know that the chromatic polynomial and the restricted Tutte polynomial
$Z(q,v_0)$ for any fixed $v_0$ are multiplicative. In fact, they are even
2-multiplicative in the following sense. 

\begin{Def}  We say that the graph polynomial $f$ is \textit{2-multiplicative}
  if it is multiplicative and
$$xf(G_1\cup G_2,x)=f(G_1,x)f(G_2,x),$$
where $G_1\cup G_2$ is any  union of the graphs $G_1$ and $G_2$ such that 
they have exactly one vertex in common.
\end{Def}

We have seen that the multiplicativity of a graph polynomial implies that
the coefficients of non-connected graphs vanish in the $k$-th power sum $p_k$:
see Lemma~\ref{mult}. The following statements show that 2-multiplicativity
implies that the coefficients of non-2-connected graphs vanish in the $k$-th
power sum $p_k$. Recall that $\mathcal{C}_2$ denotes the class of 2-connected
graphs. 

\begin{Lemma} \label{2-mult} (2-multiplicative lemma.) Assume that $f$ is a
2-multiplicative graph polynomial such that $f(G,x)$ is not the zero 
polynomial for any graph $G$.  For some $k\ge 1$, assume that the $k$-th power
sum of the roots $p_k\in\C\mathcal G$. Then $p_k\in\C\mathcal C_2$. 
\end{Lemma}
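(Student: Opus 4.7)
The plan is to mimic the proof of Lemma~\ref{add}, with one-vertex unions in place of disjoint unions. First, note that 2-multiplicativity implies a corresponding additive property of the power sums: comparing roots on the two sides of the identity $xf(G_1\cup G_2,x)=f(G_1,x)f(G_2,x)$ (valid when $G_1$ and $G_2$ share exactly one vertex), we see that the roots of $f(G_1\cup G_2,x)$ together with an extra $0$ match (as a multiset) the roots of $f(G_1,x)$ and $f(G_2,x)$. Hence $p_k(G_1\cup G_2)=p_k(G_1)+p_k(G_2)$ for $k\ge 1$. Combined with the disjoint-union case (already covered by ordinary multiplicativity), this shows that $p_k$ is \emph{2-additive}:
\[p_k(G_1\cup G_2)=p_k(G_1)+p_k(G_2)\quad\text{whenever}\quad|V(G_1)\cap V(G_2)|\le 1.\]

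Lemma~\ref{mult} already gives $p_k\in\C\mathcal C$. Since $\mathcal C$ is ascending, Fact~\ref{sub-ind} lets us write
\[p_k=\sum_{H\in\mathcal C}d_H\cdot H^*(\cdot).\]
The key if-direction observation is that for $H\in\mathcal C_2$, the function $H^*$ is itself 2-additive: any induced copy of a 2-connected $H$ inside $G_1\cup G_2$ must lie entirely in $G_1$ or entirely in $G_2$, because the unique shared vertex cannot be a cut vertex of $H$. Therefore
\[q:=p_k-\sum_{H\in\mathcal C_2}d_H\cdot H^*=\sum_{H\in\mathcal C\setminus\mathcal C_2}d_H\cdot H^*\]
is 2-additive, and it suffices to prove $q\equiv 0$, i.e.\ $d_H=0$ for every connected but non-2-connected $H$.

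We proceed by induction on $|V(H)|$. For $H=K_1$: applying 2-multiplicativity to $K_1\cup K_1=K_1$ gives $xf(K_1,x)=f(K_1,x)^2$, forcing $f(K_1,x)=x$ and hence $p_k(K_1)=0$; as $H'^*(K_1)=0$ for every $H'\in\mathcal C$ with $|V(H')|\ge 2$, this yields $d_{K_1}=p_k(K_1)=0$. For a connected $H$ with a cut vertex $v$ and $|V(H)|\ge 3$, write $H=H_1\cup_v H_2$ with $|V(H_i)|\ge 2$ and $|V(H_i)|<|V(H)|$. By 2-additivity, $q(H)=q(H_1)+q(H_2)$. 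Every $H'$ contributing to $q(H_i)$ satisfies $|V(H')|\le|V(H_i)|<|V(H)|$, so $d_{H'}=0$ by induction and $q(H_i)=0$. Meanwhile, $q(H)$ receives contributions only from $H'$ with $|V(H')|\le|V(H)|$; those with strictly smaller vertex count are killed by induction, while those with $|V(H')|=|V(H)|$ and $H'^*(H)\neq 0$ must satisfy $H'\cong H$, since the only induced subgraph of $H$ on all of $V(H)$ is $H$ itself. Thus $q(H)=d_H\cdot H^*(H)=d_H$, whence $d_H=0$. This completes the induction, and therefore $p_k\in\C\mathcal C_2^*=\C\mathcal C_2$ (again via Fact~\ref{sub-ind}, since $\mathcal C_2$ is ascending: adding edges to a 2-connected graph preserves 2-connectedness). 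The main subtlety is choosing the induced-subgraph basis $\{H^*\}$ rather than $\{H\}$: this makes the spanning-subgraph contributions vanish automatically in the induction step.
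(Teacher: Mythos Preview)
Your proof is correct and follows essentially the same approach as the paper's: establish that $p_k$ is 2-additive from the 2-multiplicativity identity, then run the induction argument characterizing 2-additive functions (the paper packages this as a separate 2-additive Lemma~\ref{2-add}, proved just like Lemma~\ref{add}). Your version differs only in minor packaging---you work in the induced-subgraph basis $\{H^*\}$ rather than $\{H\}$, and you first pass through $\C\mathcal C$ via Lemma~\ref{mult}---but the core idea is identical.
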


Naturally, the proof of Lemma~\ref{2-mult} relies on the corresponding
analogue of Lemma~\ref{add}.

\begin{Def} A function $p$ defined on graphs is \textit{2-additive} if 
\[p(G_1 \cup G_2)=p(G_1)+p(G_2)\] 
holds for the  union of any two graphs $G_1$ and $G_2$ that have at most one
vertex in common.  
\end{Def}

\begin{Lemma} \label{2-add} (2-additive lemma.)  A function  $p\in \C\mathcal G$ 
is 2-additive if and only if $p\in \C\mathcal C_2$.
\end{Lemma}

The proof of this lemma is the straightforward analogue of the proof of
Lemma~\ref{add}. 

\begin{proof}[Proof of Lemma~\ref{2-mult}.] The function 
$G\mapsto p_k(G)$ is 2-additive, so the statement follows 
from Lemma~\ref{2-add}. 
\end{proof}
 
For the sake of completeness, we include the analogues of those theorems where
multiplicativity played some role. The following theorem gives a  description
of  2-multiplicative graph polynomials of exponential type.

\begin{Th}\label{2-mult-conn} Let $f_b(G,x)$ be a graph polynomial of 
exponential type. Then $f_b$ is 2-multiplicative if and only if 
\begin{itemize}
\item[(i)] $b$ vanishes on  non-connected graphs and 
\item[(ii)] $b(H_1\cup H_2)=b(H_1)b(H_2)$ whenever $H_1$ and $H_2$ have
  exactly one vertex in common.  
\end{itemize}
\end{Th}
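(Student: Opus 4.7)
The plan is to rely on the partition formula $f_b(G,x)=\sum_\pi \prod_{S\in\pi}b(G[S])\,x^{|\pi|}$ from Theorem~\ref{exp-bb} and to analyse what 2-multiplicativity says about partitions of $V(G_1\cup G_2)$ when $G_1,G_2$ share exactly one vertex $v$. Since condition (i) is precisely the characterisation of multiplicativity from Theorem~\ref{mult-conn}, and 2-multiplicativity includes multiplicativity, I may reduce the whole theorem to the claim that, assuming (i), the remaining half of 2-multiplicativity, namely $x\,f_b(G_1\cup G_2,x)=f_b(G_1,x)f_b(G_2,x)$, is equivalent to (ii).

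For the ``only if'' direction, I apply the vertex-gluing identity to $G_i=H_i$ and extract the coefficient of $x^2$. Because $f_b$ of a non-empty graph has zero constant term and its $x$-coefficient is exactly $b$ (by Theorem~\ref{exp-bb}(b)), the $x^2$-coefficient on the left is $b(H_1\cup H_2)$ while on the right it is $b(H_1)b(H_2)$; this gives (ii).

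For the ``if'' direction, I expand $f_b(G_1\cup G_2,x)$ over partitions $\pi$ of $V(G_1\cup G_2)$ into parts whose induced subgraphs are connected (condition (i) kills the other terms). I then set up a bijection between such $\pi$ and pairs $(\pi_1,\pi_2)$, where $\pi_i$ is an analogous partition of $V(G_i)$: the unique part of $\pi$ containing $v$ corresponds to the gluing at $v$ of the two parts of $\pi_1,\pi_2$ that contain $v$, and every other part lies entirely on one side. Under this bijection, (ii) factorises the $b$-weight of the glued part, so $\prod_{S\in\pi} b(G[S])=\prod_{S\in\pi_1}b(G_1[S])\prod_{T\in\pi_2}b(G_2[T])$, while the part count satisfies $|\pi|=|\pi_1|+|\pi_2|-1$. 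Summing delivers $f_b(G_1\cup G_2,x)=x^{-1}f_b(G_1,x)f_b(G_2,x)$, as required.

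The only genuinely non-bookkeeping step is establishing that bijection. What must be checked is that any connected subset of $V(G_1\cup G_2)$ either avoids the two sides' separation entirely (i.e.\ lies in one $V(G_i)$) or contains $v$, and conversely that a set $S_v\ni v$ induces a connected subgraph of $G_1\cup G_2$ iff both $S_v\cap V(G_1)$ and $S_v\cap V(G_2)$ induce connected subgraphs of $G_1$ and $G_2$, respectively. Both facts follow at once from the hypothesis $V(G_1)\cap V(G_2)=\{v\}$, so there is no real obstacle; once the bijection is in place, the factorisation of the $b$-weight via (ii) and the arithmetic $|\pi|=|\pi_1|+|\pi_2|-1$ make the identity assemble immediately.
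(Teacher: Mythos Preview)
Your proposal is correct and takes essentially the same approach as the paper. The ``only if'' direction is identical (extract the coefficient of $x^2$ from the gluing identity), and your bijection in the ``if'' direction is precisely the combinatorial content behind the convolution formula $a_k(H_1\cup H_2)=\sum_{j=1}^{k}a_j(H_1)a_{k+1-j}(H_2)$ that the paper states; you have simply made explicit what the paper leaves as ``from Theorem~\ref{exp-bb} we see that\ldots''.
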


\begin{proof} Since 2-multiplicativity implies multiplicativity, the
  condition (i) is necessary.   For (ii), note that  if $H=H_1\cup H_2$ and 
$|V(H_1)\cap V(H_2)|=1$ then by taking the coefficient of $x^2$ in
$xf_b(H)=f_b(H_1)f_b(H_2)$ we get the statement.

On the other hand, if (i) and (ii) hold, then $f_b$ is multiplicative and from
Theorem~\ref{exp-bb}  we see that  if $|V(H_1)\cap V(H_2)|=1$ then
$$a_k(H_1\cup H_2)=\sum_{j=1}^ka_j(H_1)a_{k+1-j}(H_2)$$
which means that $xf(H_1\cup H_2,x)=f(H_1,x)f(H_2,x)$.
\end{proof}

The following theorem is the natural ``third part'' of Theorem~\ref{lin}. 

\begin{Th}\label{2-lin} Let $$f(G,x)=\sum_{k=0}^n(-1)^{k}e_k(G)x^{n-k}$$ be an
isomorphism-invariant  monic graph polynomial of exponential type, where
$n=|V(G)|$. Let  $k\ge 1$. Define $p_k(G)$ to be the $k$-th power sum of the
roots of $ f(G,x)$. If $f$  is 2-multiplicative, then
\begin{equation*}\label{p-2-mult}
p_k\in \C \{H :  H \textit{ is 2-connected and } 2\le |V(H)|\le k+1\}.
\end{equation*}
\end{Th}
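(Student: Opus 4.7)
The plan is to combine Theorem~\ref{lin}(b) and Lemma~\ref{2-mult}, both already available, by invoking the linear independence of the subgraph counting functions $H(\cdot)$.

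First, since 2-multiplicativity implies multiplicativity, Theorem~\ref{lin}(b) applies directly and yields
\[
p_k \in \C\{H : H \text{ connected and } 2 \le |V(H)| \le k+1\}.
\]
In particular $p_k \in \C\mathcal G$, so Lemma~\ref{2-mult} is applicable and gives $p_k \in \C\mathcal C_2$.

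Next I would invoke the linear independence of the functions $H(\cdot)$ as $H$ ranges over isomorphism types (recalled just before Fact~\ref{sub-ind}). Any finite expansion $p_k = \sum_H c_H\, H(\cdot)$ is therefore unique, so the two containments above constrain the same coefficients $c_H$: they force $c_H = 0$ unless $H$ is simultaneously connected, has at most $k+1$ vertices, and 2-connected. The surviving indices are exactly the 2-connected graphs with $2\le |V(H)|\le k+1$ (the lower bound being automatic from the paper's convention that 2-connected graphs have at least two vertices), which is the asserted statement.

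There is no real obstacle beyond what is absorbed into the cited results. The substantive input is that 2-multiplicativity forces each $p_k$ with $k\ge 1$ to be 2-additive: from $xf(G_1\cup G_2,x) = f(G_1,x)f(G_2,x)$ and the vanishing of the constant term of exponential-type polynomials on non-empty graphs, the multiset of roots of $f(G_1\cup G_2,x)$ differs from the union of the root multisets of $f(G_1,x)$ and $f(G_2,x)$ by a single $0$, which contributes nothing to the $k$-th power sum for $k\ge 1$. This 2-additivity is precisely what Lemma~\ref{2-mult} (via Lemma~\ref{2-add}) converts into the 2-connectedness restriction, completing the argument.
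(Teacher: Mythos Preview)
Your proof is correct and follows essentially the same approach as the paper: combine Lemma~\ref{2-mult} with the bound \eqref{p-mult} from Theorem~\ref{lin}(b), then intersect the two constraints using the linear independence of the functions $H(\cdot)$. The paper's proof is the one-line ``follows from Lemma~\ref{2-mult} and \eqref{p-mult}''; you have simply spelled out the details and added a helpful explanation of why 2-multiplicativity makes $p_k$ 2-additive.
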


\begin{proof}
The statement follows from  Lemma~\ref{2-mult} and \eqref{p-mult}. 
\end{proof}

\begin{Rem} \label{coeff-formula} It is also interesting to note that if we
  consider the chromatic polynomial, then in the expression
$$p_k(\cdot)=\sum_{H}c_{k}(H) H(\cdot),$$
all coefficients $c_{k}(H)$ are integers. In fact, this is always true if the
parameters $b(\cdot)$ defining the exponential type graph polynomial are
integers. So it is also true for the matching, adjoint, Laplacian polynomials.
This can be seen from the proof of Theorem~\ref{lin}. 

Moreover, by looking at  that proof more closely, it is also easy to obtain
an explicit formula for $c_k(H)$.  Let $f$ be an isomorphism-invariant,
multiplicative monic graph polynomial of exponential type. By
Theorem~\ref{lin}(b), we have
\[(-1)^ie_i(G)=\sum_Hc(H)H(G),\] where $H$ runs over graphs without isolates
satisfying $|V(H)|-k(H)=i$.  Then, for $k\ge 1$,
\begin{equation}\label{formula}
c_k(H) =  k  \sum_{H_1,...,H_m}  (-1)^m\frac{(m-1)!} { \prod_i m_i!}
\prod_{j=1}^m c(H_j).
\end{equation}
The summation is over sequences
$ H_1$, \dots, $H_m $ of isolate-free subgraphs of $H$    which satisfy $
V(H_1)\cup \dots \cup V(H_m)=V(H) $,  $ E(H_1)\cup \dots \cup E(H_m)=E(H) $
and
$\sum im_i  =  k$, where $m_i$ is the number of indices $j$ such that
$|V(H_j)|-k(H_j)=i$.

Assume further that $f$ is 2-multiplicative.
By Theorem~\ref{2-lin}, the sum in \eqref{formula} is zero unless $H$ is
2--connected and $2\le |V(H)|\le k+1$.

When $f(G,q)$ is the Tutte polynomial $ Z_G(q,v)$,  we have
$c(H)=v^{|E(H)|}$, so

\begin{equation}\label{subcoeff}
c_k(H) =  k  \sum_{H_1,...,H_m}  (-1)^m v^{\sum |E(H_j)|}\frac{(m-1)!} {
\prod m_i!}.
\end{equation}

For the chromatic polynomial, we substitute $v=-1$ in \eqref{subcoeff}.
\end{Rem}

\section{Concluding remarks}

Clearly, the conditions of Theorem~\ref{f-conv-entr} are very weak and most of
the well-known graph polynomials satisfy them. The class of multiplicative
graph polynomials of bounded exponential type  already contains the matching
polynomial, the chromatic polynomial (in fact, all restricted Tutte
polynomials) and the Laplacian characteristic polynomial. On the other hand,
the characteristic polynomial of the adjacency matrix is not of exponential
type, but trivially satisfies all conditions: it is multiplicative, all its
roots have absolute value at most $\Delta$ and $p_k(G)=\hom(C_k,G)$ (the
number of closed walks of length $k$). 

It is  not known how the condition $\xi\not\in K$ in
Theorem~\ref{f-conv-entr}(b) can be relaxed. R.\ Lyons \cite{lyons} proved that
if $(G_n)$ is Benjamini-Schramm convergent, then 
$(\log \tau(G_n))/{|V(G_n)|}$
is convergent, where $\tau(G_n)$ denotes the number of spanning trees of the
graph $G_n$. This result can be expressed as a result concerning a special
value of the Tutte polynomial, namely $\tau(G)=T(G,1,1)$. It is clearly not a
special case of our theorems, because this point is deep inside the disc $K$
given by the Sokal bound.

\end{document}